\theoremstyle{definition}
\newtheorem{dfn}{Definition}
\newtheorem{rem}{Remark}
\newtheorem*{xrem}{Remark}
\theoremstyle{plain}
\newtheorem{lem}[dfn]{Lemma}
\newtheorem{twr}[dfn]{Theorem}
\newtheorem{stw}[dfn]{Proposition}
\newtheorem{cor}[dfn]{Corollary}
\newtheorem{fact}{Fact}
\def\Re{{\mathrm{Re}}}
\numberwithin{equation}{section}
\title[Indecomposable continua]{ Indecomposable continua in exponential dynamics-Hausdorff dimension}
\thanks{A. Zdunik was supported in part by the Polish NCN grant  N N201 607940.}
\keywords{Holomorphic dynamics,  Julia sets, indecomposable continua, Hausdorff dimension}
\subjclass{Primary 37F35, Secondary  37B45}
\begin{document}

\author{{\L}ukasz Pawelec}
\address{{\L}ukasz Pawelec, Department of Mathematics and Mathematical Economics, Warsaw School of Economics, 
al.~Niepodleg\l{}o\'{s}ci~162, 02-554 Warszawa, Poland}
\email{LPawel@sgh.waw.pl}
\author{Anna Zdunik}
\address{Anna Zdunik,Institute of Mathematics, University of Warsaw,
ul.~Banacha~2, 02-097 Warszawa, Poland}
\email{A.Zdunik@mimuw.edu.pl}

\begin{abstract}
We study some forward invariant  sets appearing in the dynamics of the exponential family. 
We prove that the Hausdorff dimension of the sets under consideration is not larger than $1$.
This allows us to prove, as a consequence, a result for some dynamically defined  indecomposable continua which appear in the dynamics of the exponential family. We prove that the Hausdorff dimension of these continua  is equal to one.
\end{abstract}
\maketitle

\section{Introduction}\label{intro}
The dynamics of transcendental meromorphic functions has been developed as a counterpart to the theory of rational iterations.
A model family $\lambda\mapsto\lambda\exp(z)$ has been a subject of a  particular interest, playing a  similar role
as the family of quadratic polynomials $z^2+c$ in the iterations of rational maps, see e.g. \cite{de1} for a review of results on this family.

There are several properties of the iterations of entire (and, more general, transcendental) maps which have no counterpart in the dynamics of rational maps.

Below, we recall  basic definitions and facts. 

Let $f:\mathbb{C}\to \hat{\mathbb{C}}$ be a transcendental meromorphic function. As usually,
we shall denote by $f^n$ the $n$-th iterate of $f$: $f^n=f\circ f\circ \dots \circ f$. The  sequence $f^n(z)_{n=0}^\infty$ is called the trajectory of $z$.
The Fatou set $F(f)$ consists of all points $z\in\mathbb{C}$ for which there exists a neighbourhood $U\ni z$ such that the family of iterates $f^n_{|U}$ is defined in $U$ and forms a normal family.
The complement of $F(f)$ is called the Julia set of $f$ and it is denoted by $J(f)$.  An intuitive characterisation of the Julia set says that it carries the chaotic part of the dynamics. See e.g.  
 \cite{bergweiler} and~\cite{ku} for a detailed presentation of the theory. 

For the particular family of maps $f_\lambda(z)= \lambda e^z$ the structure of the Julia set and the dynamics have been studied intensively. 
In 1981 M. Misiurewicz answered the old  question (formulated by Fatou) and proved that the Julia set of the map $f_1(z)=e^z$ is the whole plane (\cite{misiurewicz}). A striking result, proved independently by M. Lyubich \cite{lyubich}  and M. Rees \cite{rees} says, that, nevertheless, the map is not ergodic with respect to the two- dimensional Lebesgue measure, and for Lebesgue almost every point $z$  the $\omega$-limit set  of $z$ (i.e. the set of accumulation points of the trajectory) is just the trajectory of the  singular value $0$ plus the point at infinity.

On the other hand, there is an open set of parameters $\lambda$ for which there exists a periodic attracting orbit (i.e. a point $p_\lambda$ such that $f^k_\lambda(p_\lambda)=p_\lambda$ and $|(f^k_\lambda)'(p_\lambda)|<1$ for some $k\in \mathbb{N}$). In this case, the Fatou set is nonempty and it contains a neighbourhood of this periodic attracting orbit. Actually, it turns out that in this case the Fatou set is just the  basin of attraction of this periodic orbit, i.e.
$$F(f_\lambda)=\{z\in\mathbb{C}:\lim_{n\to\infty}f^{nk}_\lambda(z)= f^i (p_\lambda){\rm ~for ~some~} i\in \{0,1,\dots {k-1\}\}}.$$
Moreover, this basin of attraction must contain the whole trajectory  of the singular value $0$. 
For instance, if $\lambda\in (0,\frac{1}{e})$, then $f_\lambda$ has a real attracting fixed point $p_\lambda$. For these values of $\lambda$  the Fatou set of $f_\lambda$ 
is connected and simply connected and its complement $J(f_\lambda)$ is a "Cantor bouquet" of curves. See \cite{de1} or \cite{schleicherparadox} for a survey of results in this direction. 

\

A general classification theorem for the components of the Fatou set leads to the following corollary:  if the parameter $\lambda$ is chosen so that   $f^n_\lambda(0)\to\infty$ then  the Julia set of $f_\lambda$  is the whole plane: $J(f_\lambda)=\mathbb{C}$ (see \cite{eremenkolyubich} or \cite{goldbergkeen}).

In this paper, we shall always assume that the parameter $\lambda$ is chosen so that  $f_\lambda^n(0)\to\infty$ sufficiently fast. Thus, in particular, for all maps $f_\lambda$ considered in this paper we have  $J(f_\lambda)=\mathbb{C}$ (see Section \ref{statement} for setting of precise assumptions).

\

We shall explain now the motivation of the present paper.

It is known that the set of escaping points 
$$I(f_\lambda)=\{z:f^n_\lambda(z)\to\infty\}$$
can be described in terms of so-called "dynamic rays" (see,  \cite{schleicherzimmer} and \cite{rempetopo}). Each  dynamic ray is a curve 
 $g_{\underline s}:(t_{\underline s},\infty)\to\mathbb{C}$.
  Moreover, $\mathrm{Re}(g_{\underline s}(t))\to +\infty$ as $t\to +\infty$. 
  Each such curve is characterised by an infinite sequence  $\underline{s}=(s_0,s_1,\dots, )$ of integers. The sequence $\underline s$ is frequently called the "external address" of the curve $g_{\underline s}$.
The dynamical meaning of the sequence $\underline{s}$ is the following: 
Let us divide the plane $\mathbb{C}$ into horizontal strips 
\begin{equation}\label{strips}
P_k=\{z\in\mathbb{C}:(2k-1)\pi-\mathrm{Arg}(\lambda)<\mathrm{Im}(z)\le(2k+1)\pi-\mathrm{Arg}(\lambda)\}
\end{equation}

If $z$ is a point on the curve $g_{\underline s}$, $z=g_{\underline s}(t)$ with $t$ sufficiently large, 
then, for every $n\ge 0$, $f^n_\lambda(z)\in P_{s_n}$. 

The classification of escaping points (see \cite{schleicherzimmer}, Corollary 6.9) says that this family of curves almost exhausts the set $I(f_\lambda)$.  Namely, if $z\in I(f_\lambda)$ then either $z$ belongs to some curve $g_{\underline s}$, or $z$ is a landing point of some curve $g_{\underline s}$, or else the singular value $0$ escapes, $0=g_{\underline s}(t_0)$ for some $t_0>t_{\underline s}$, and $z$ is eventually mapped to the initial piece of the curve  $g_{\underline s}$, cut off by the point $0$, i.e.
$$f^n_\lambda(z)=g_{\underline s}(t')$$
for some $t_{\underline s}<t'<t_0$.
 
As an example, let us consider $\lambda=1$ and the dynamical ray corresponding to the sequence $\underline s =(0,0,0,\dots)$. This set is just the real line $\mathbb{R}$. The set 

$$\bigcup_{n\ge 0}f^{-n}_\lambda(\mathbb{R})\cap \{\mathrm{Im}(z)\in [0,\pi]\}$$

is a union of infinitely many arcs extending to infinity. It was observed first by R. Devaney (see 
\cite{de1}) that the closure of this set, after a natural compactification at  $\infty$, becomes an indecomposable continuum.
This continuum can be equivalently described as the set of points whose trajectories  remain in the strip
$\mathrm{Im}(z)\in [0,\pi]$.

Next, again for $\lambda=1$, R. Devaney and X. Jarque discovered the existence of indecomposable continua defined as an accumulation set, in the Riemann sphere,  of some dynamic rays.
More precisely, they considered  in \cite{devaneyjarque} the  rays of the form $g_{\underline s}$, where $\underline s=(t_{m_1},0_{n_1}, t_{m_2}, 0_{n_2}\dots)$, where $t_{m_j}$ are blocks of integers, of length $m_j$, with all digits $\le M$. If the blocks of zeroes $0_{n_j}$ are sufficiently long, then the set of accumulation points of the ray $g_{\underline s}$ becomes an indecomposable continuum containing $g_{\underline s}$.

A more general result appeared in \cite{rempenonlanding}. The author shows  the following: assume that the singular value $0$ of the map $f_\lambda$ is on a dynamic ray, or it is a landing point of such a ray. Then there are uncountably many dynamic rays $g$ whose accumulation set (in the Riemann sphere) is an indecomposable continuum containing $g$.

\

The present paper was motivated by the abovementioned examples.  Our goal was to give a bound for the Hausdorff dimension of  dynamically defined indecomposable continua  appearing in the exponential dynamics.
It was already observed  in \cite{devaneyetds} that M.Lyubich's result \cite{lyubich} implies that two- dimensional Lebesgue measure of the indecomposable continuum described in \cite{de1}, is equal to $0$. Actually, it is easy to see that the same remark applies to the examples considered in \cite {devaneyjarque}.

Our goal in this paper is to prove, for a class of dynamically defined indecomposable continua for the exponential family, that their Hausdorff dimension is equal to one, so it takes on the smallest possible value. This class of continua contains, in particular, the examples described in \cite{de1} and \cite{devaneyjarque}.
So, our result is an essential strengthening of the previously known  estimates.

Our results apply also to a subclass of the class of continua described in \cite{rempenonlanding}.
It is now natural to ask about the possible values of the Hausdorff dimension of other dynamically defined indecomposable continua (appearing  in the dynamics of exponential maps),  not covered by our considerations (in particular- all the continua described in \cite{rempenonlanding}).

\

Our paper is organised as follows. In Section~\ref{statement} we formulate the condition on the parameter $\lambda$ ("super-growing parameters", see Definition~\ref{defsuper}). Then we introduce and describe some forward invariant sets for the dynamics of the map $f_\lambda$, where $\lambda$ is
a super-growing parameter.

We formulate a result (Theorem~\ref{bigM}) on the upper bound of the Hausdorff dimension of these forward invariant sets, namely, that the Hausdorff dimension of such set is not larger than $1$. This is a result of independent interest,  useful for further applications.

The proof of Theorem~\ref{bigM} is divided into several steps, and it is presented in Sections~\ref{induced}, ~\ref{ppositive},~\ref{nnegative} and ~\ref{conclusion}.

In Section~\ref{wnioski} we show in detail how Theorem~\ref{bigM} can be used to estimate the dimension of the  particular, dynamically defined  indecomposable continua. In each case when the application of Theorem~\ref{bigM} is possible, we get the strongest possible 
upper bound  on the dimension, proving that the Hausdorff dimension of the continuum is equal to one.
See Theorems \ref{contdevaney} and~\ref{contrempe} for the precise statement. 
In addition, it is worth noting  that one- dimensional Hausdorff measure of an indecomposable continuum in the plane is not $\sigma$-finite, see Proposition~ \ref{sskoncz}, Section~\ref{hm}.

\

\noindent {\bf Acknowledgements.}
We thank Bogus{\l}awa Karpi\'nska and Krzysztof Bara\'nski for interesting discussions on the subject.
We are grateful to  Dierk Schleicher for his  valuable  comments, and for drawing our attention to possible further generalisation of the results.

\section{Statement of the technical result}\label{statement}

 Throughout the paper we will work with the function $f_\lambda(z) = \lambda e^z$ for which  the trajectory of the singular value $0$  tends to
 infinity exponentially fast. We keep the following definition and notation, introduced in \cite{urbanskizduniknonhyperbolic}. 
Let 

\begin{equation}\label{imre}
    \beta_n = f_\lambda^{n}(0),~~~ \alpha_n={\rm Re}\beta_n
\end{equation}

\begin{dfn}\label{defsuper}
We say that the parameter $\lambda$ is \emph{super-growing} if $\alpha_n\to \infty$ and there exists a constant $c>0$ such that, for all $n$ large enough, 

\begin{equation}\label{super}
\alpha_{n+1}\ge ce^{\alpha_n}
\end{equation}
\end{dfn}

Note that the above condition is equivalent to

\begin{equation}\label{super2}
|\beta_{n+1}|\ge |\lambda|\exp\left (\frac{c}{|\lambda|}|\beta_n|\right)
\end{equation}
and to
\begin{equation}\label{superreal}
\alpha_n\ge\frac{c}{|\lambda|}|\beta_n|
\end{equation}

 From now on, unless stated otherwise,  we  assume that the   parameter $\lambda$  satisfies  the super-growing condition  (\ref{super}), with the constant $c$.
We shall keep  the notation (\ref{imre}). To simplify the notation, we  write  $f(z)$ instead of $f_\lambda(z)$, if it does not lead to a confusion.

\begin{xrem} For example, the  condition (\ref{super}) is satisfied if $|\mathrm{Im}(\beta_n)|$ is bounded  and $f_\lambda^{n}(0) \to \infty$ as $n \to +\infty$. Moreover, 
it was proved in \cite{we} that the Hausdorff dimension of parameters $\lambda$ satisfying the super-growing condition is equal to two.
(See also \cite{forsterschleicher} for a detailed description of the structure of parameters $\lambda$ for which the trajectory of the singular value escapes to $\infty$.)
\end{xrem}

\

In order to formulate the main technical result, we introduce the following  definition.

\begin{dfn}\label{thin}
Let $W\subset\mathbb{C}$ be a closed set.
 Denote by $W(R)_+$ the intersection $W(R)_+=W\cap \{{\rm Re}z=R\}$ and, analogously,
$W(R)_-=W\cap \{{\rm Re}z=-R\}$. 
Let 
$$w(R)=\max({\rm diam} W(R)_+,{\rm diam} W(R)_-).$$
  We call the set $W$ \emph{thin} if 
there is a constant $K>0$ such that
\begin{equation}\label{stozek}
\frac{|z|}{|\Re (z)|+1}<K
\end{equation}
for all $z\in W$, and
\begin{equation}\label{thinn}
\lim_{R\to\infty}\frac{\log_{+} w(R)}{\log R} =0.
\end{equation}

\end{dfn}

\begin{rem}
Obviously, every horizontal strip
$$\{z\in\mathbb{C}:|\mathrm{Im}(z)|\le P\}$$
is a thin set.

Note also that the condition (\ref{stozek}) implies that there is a cone symmetric with respect to the real axis, with
the opening angle smaller that $\pi$ and $R_0>0$ such that  the intersection $W\cap |\{\mathrm{Re}(z)|\ge R_0\}$  is contained in this cone.
\end{rem}   

The following definition introduces the set which will be an object of our estimates.

\begin{dfn}\label{lambda}
Let $\lambda\in\mathbb{C}\setminus\{0\}$. Put $f=f_\lambda$.
 Let $W$ be a thin set. We define the set $\Lambda_W$ as
 $$\Lambda_W=\{z\in\mathbb{C}: f^n(z)\in W~{\rm for~~every~~} n\ge 0\}.$$
\end{dfn}

Note that the set $\Lambda_W$ is a forward invariant closed subset of $\mathbb{C}$. Forward invariant means that $f(\Lambda_W)\subset \Lambda_W$.

\begin{rem}
Since we do not assume  any dynamical condition on $W$, it may even happen that  the set $\Lambda_W$ is  
empty. However,  in all our applications (see Section~\ref{wnioski}) the set $\Lambda_W$ contains a non-trivial continuum, so it has Hausdorff dimension at least one.
\end{rem}

We shall prove the following.

\begin{twr}\label{bigM}
Let $\lambda$ be a super-growing parameter. Let $W$ be a thin set. Put $f=f_\lambda$ and let $\Lambda_W$ be the set defined in Definition~\ref{lambda}. 

Put
\begin{equation}
Y_M=\{z\in\mathrm{C}:|\mathrm{Re}(z)|\ge M\}
\end{equation}
Then 
\begin{equation}
\lim_{M\to\infty}\mathrm{dim}_H(\Lambda_W\cap Y_M)\le 1.
\end{equation}

\end{twr}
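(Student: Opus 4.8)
The plan is to bound the Hausdorff dimension from above by constructing efficient \emph{dynamical covers} of $\Lambda_W\cap Y_M$ and estimating the associated $s$--dimensional sums for $s$ slightly larger than $1$. The starting point is the conformality of $f$ together with the elementary identity $|f'(z)|=|f(z)|$, which gives, along any orbit $z_0,z_1=f(z_0),\dots$ with $z_j\in W$,
\begin{equation*}
|(f^n)'(z_0)|=\prod_{j=0}^{n-1}|f(z_j)|=|\lambda|^n\exp\Big(\sum_{j=0}^{n-1}\mathrm{Re}(z_j)\Big).
\end{equation*}
The cone condition (\ref{stozek}) lets me replace $|z_{j+1}|=|\lambda|e^{\mathrm{Re}(z_j)}$ by a quantity comparable to $|\mathrm{Re}(z_{j+1})|+1$; hence, whenever $\mathrm{Re}(z_j)\ge M$ is large and positive, the next real part is super--exponentially large, $|\mathrm{Re}(z_{j+1})|\approx e^{\mathrm{Re}(z_j)}$, and $f$ is strongly expanding there. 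I would record the symbolic itinerary of a point of $\Lambda_W$ via the strips $P_k$ of (\ref{strips}), so that cylinders of depth $n$ are cut out by inverse branches of $f^n$.

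The key geometric input is that thinness forces the cover to be essentially one--dimensional transversally. By (\ref{stozek}) the set $\Lambda_W$ lies in a cone about the real axis, so the rays composing it run roughly horizontally and the transverse (vertical) extent of $W$ at real part $\pm R$ is at most $w(R)=R^{o(1)}$ by (\ref{thinn}). Consequently the number of strips $P_k$ meeting $W$ at that level, which is the branching number of the symbolic coding at one step, is only $R^{o(1)}$. Along a single ray there is no branching and the piece is a curve of bounded length per unit of real part, contributing dimension exactly $1$; transversally, one step of the dynamics multiplies lengths by $|f'|=|\lambda|e^{\mathrm{Re}}$ while multiplying the number of admissible pieces by at most $(\text{real part})^{o(1)}$. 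Thus the transverse box--counting dimension satisfies, heuristically,
\begin{equation*}
\dim\;\lesssim\;\frac{\log(\text{branching})}{\log(\text{expansion})}\;\approx\;\frac{o(\mathrm{Re}(z_j))}{\mathrm{Re}(z_j)}\;\le\;\epsilon(M),
\end{equation*}
with $\epsilon(M)\to 0$ as $M\to\infty$. I would make this rigorous by covering $\Lambda_W\cap Y_M\cap\{|\mathrm{Re}|\le T\}$ with dynamically defined squares of side $\tau$: along the ray one needs $\approx\tau^{-1}$ of them and transversally $\approx\tau^{-\epsilon(M)}$, so the $s$--sum is $\approx\tau^{\,s-1-\epsilon(M)}\to 0$ for $s>1+\epsilon(M)$. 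Letting $T\to\infty$ (continuity of $\mathcal{H}^s$ from below) and then $M\to\infty$ yields the claim.

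The arguments above work only in the \emph{expanding regime} $\mathrm{Re}(z_j)\ge M$, and the main obstacle is the \emph{non--uniform hyperbolicity}: an orbit in $\Lambda_W$ may have $\mathrm{Re}(z_j)$ very negative, where $|f'(z_j)|=|\lambda|e^{\mathrm{Re}(z_j)}$ is \emph{small} and $f$ contracts; moreover the cylinders are unbounded (they contain entire rays running to $\infty$), so one cannot cover them naively. This is why I would split the analysis into the part of the orbit with positive real part and the part with negative real part and pass to an \emph{induced} (accelerated) map that records only the expanding returns. For the negative part I would use that a point with $\mathrm{Re}(z_0)\le -M$ is mapped by $f$ into a disc of radius $|\lambda|e^{-M}$ about the singular value $0$, after which its orbit \emph{shadows} the singular orbit $\beta_n=f^n(0)$; since $\lambda$ is super--growing, this shadowed orbit has real parts $\alpha_n\to\infty$ obeying (\ref{super}), and therefore falls into the already--treated expanding regime. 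As Hausdorff dimension is preserved under the (conformal, locally injective) map $f$, the dimension of the negative part is controlled by that of its image far out in the expanding region.

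The technically hardest step, and the place where the super--growing hypothesis is genuinely used, is the \emph{bounded distortion} estimate needed to justify that depth--$n$ cylinders have diameter comparable to $w(\cdot)/|(f^n)'|$ and that the above counting is accurate. Because $f$ has its singular value at $0$ and $J(f_\lambda)=\mathbb{C}$, univalent pull--backs must avoid neighbourhoods of the whole post--singular orbit $\{\beta_n\}$; controlling the resulting distortion is exactly what (\ref{super}) secures, guaranteeing that $\{\beta_n\}$ escapes fast enough for the relevant inverse branches to have uniformly bounded distortion. Once this is in place, assembling the covering--sum (transfer--operator) estimate over the two regimes and optimising in $s$ completes the proof.
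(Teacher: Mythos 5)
Your plan follows essentially the same route as the paper: an induced (accelerated) map, with the positive--real--part regime handled by playing the expansion $|f'(z)|=|\lambda|e^{\mathrm{Re}(z)}$ against the thinness-controlled branching (the paper's Lemma on rectangles $R^k_r$ with $r\ge M$), and the negative--real--part regime handled by shadowing the singular orbit $\beta_n$, where the super-growing condition supplies both the eventual re-entry into the expanding region and the bounded distortion of the pullbacks along $\{\beta_n\}$, exactly as in the paper's construction of the sets $V_l$ and the map $F=f^{l+2}$; your final covering-sum estimate for exponent $1+\delta$ is the content of the paper's Proposition~\ref{induk} and Section~\ref{conclusion}. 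One caution: your remark that the dimension of the negative part is ``controlled by that of its image far out in the expanding region'' via conformal invariance does not by itself reduce matters to the positive regime, because the image of a far-left point under the accelerated map, while having $|\mathrm{Re}|$ large, may again have very negative real part --- so the uniform sum estimate over both regimes (which you do propose in your last paragraph) is genuinely needed rather than a mere reduction to the $\mathrm{Re}\ge M$ case.
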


(Note that the limit exists because the function $M\mapsto \mathrm{dim}_H(\Lambda_W\cap Y_M)$ is non-increasing).

\

So, writing $\Lambda_W$ as a union  $\Lambda_W=\Lambda_{W,bd}\cup \Lambda_{W, ubd}$, where $\Lambda_{W,bd}$, $\Lambda_{W,ubd}$  denote the subset of points in $\Lambda_W$ with bounded (resp.: unbounded) trajectory, we have

\begin{cor}\label{ubd}
$$\dim_H(\Lambda_{W,ubd})\le 1.$$
\end{cor}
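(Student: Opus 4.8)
The plan is to deduce the corollary from Theorem~\ref{bigM} by pushing a point with unbounded trajectory forward until its real part is large, and then pulling back through the (dimension-preserving) inverse branches of $f$. Fix $M>0$. First I would establish the inclusion
$$\Lambda_{W,ubd}\subseteq\bigcup_{n=0}^\infty f^{-n}\bigl(\Lambda_W\cap Y_M\bigr).$$
Indeed, take $z\in\Lambda_{W,ubd}$. Every iterate $f^n(z)$ lies in $W$, so the cone condition~(\ref{stozek}) gives $|f^n(z)|<K\,(|\mathrm{Re}(f^n(z))|+1)$ for all $n$. Hence, were the numbers $|\mathrm{Re}(f^n(z))|$ bounded, the whole trajectory would be bounded, contradicting $z\in\Lambda_{W,ubd}$. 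Thus $\sup_n|\mathrm{Re}(f^n(z))|=\infty$, so there is some $n$ with $f^n(z)\in Y_M$; since $\Lambda_W$ is forward invariant, $f^n(z)\in\Lambda_W\cap Y_M$, i.e.\ $z\in f^{-n}(\Lambda_W\cap Y_M)$. This is the crucial place where the thinness hypothesis on $W$ is used: it converts ``unbounded orbit'' into ``unbounded real part along the orbit''.

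Next I would exploit that $f_\lambda'(z)=\lambda e^z$ never vanishes, so $f$ is a local biholomorphism and all inverse branches of $f^n$ are conformal. A conformal map is locally bi-Lipschitz, hence preserves Hausdorff dimension; writing $f^{-n}(\Lambda_W\cap Y_M)$ as the countable union of the images of $\Lambda_W\cap Y_M$ under these branches (each branch domain and target decomposed, if necessary, into countably many bounded pieces on which it is bi-Lipschitz), countable stability of Hausdorff dimension gives
$$\dim_H f^{-n}\bigl(\Lambda_W\cap Y_M\bigr)\le \dim_H\bigl(\Lambda_W\cap Y_M\bigr)$$
for every $n$. Combining this with the inclusion above and, once more, countable stability over $n$, I obtain $\dim_H\Lambda_{W,ubd}\le\dim_H(\Lambda_W\cap Y_M)$. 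Since this holds for every $M$ and $M\mapsto\dim_H(\Lambda_W\cap Y_M)$ is non-increasing, letting $M\to\infty$ and invoking Theorem~\ref{bigM} yields
$$\dim_H\Lambda_{W,ubd}\le\lim_{M\to\infty}\dim_H\bigl(\Lambda_W\cap Y_M\bigr)\le 1,$$
which is the assertion.

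The only genuinely delicate point is the preimage dimension estimate: I must make sure the inverse branches are well defined and conformal on the sets at hand. Here I would use that $0$, the single finite asymptotic value of $f_\lambda$, has no $f$-preimage, so the branches of the logarithm defining $f^{-1}$ are conformal on slit planes that stay away from $0$, and that $\Lambda_W\cap Y_M\subset\{|\mathrm{Re}(z)|\ge M\}$ is uniformly bounded away from $0$; decomposing into countably many bounded pieces then keeps every bi-Lipschitz constant finite. Everything else in the argument is formal, so I expect this covering-and-pullback scheme to be the whole proof.
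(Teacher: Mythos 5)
Your argument is correct and follows essentially the same route as the paper: the inclusion $\Lambda_{W,ubd}\subset\bigcup_{n\ge 0}f^{-n}(\Lambda_W\cap Y_M)$, preservation of Hausdorff dimension under preimages of the (nowhere-critical) analytic map $f$, countable stability of dimension, and then Theorem~\ref{bigM}. You merely spell out details the paper labels as obvious, in particular deriving the inclusion from the cone condition~(\ref{stozek}).
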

\begin{proof}
Obviously, for every $M>0$,  $$\Lambda_{W,ubd}\subset \bigcup_{n=0}^\infty f^{-n} (\Lambda_W\cap Y_M).$$
Since $f$ is an analytic non-constant map,  every Borel set $A$  has the same Hausdorff dimension as its preimage  $f^{-1}(A)$. Taking a countable union of sets of the same dimension does not increase the dimension. So,  Corollary follows immediately from Theorem ~\ref{bigM}.
\end{proof}

\

The proof of Theorem~\ref{bigM}  is split into several steps. In Section~\ref{induced} we claim the existence of the special induced map, and we formulate, in Proposition~\ref{induk}, some numerical estimates for this map. 
Next, the precise  definition of the map and the proof of the required estimates is presented in Section~\ref{ppositive} and \ref{nnegative}. In Section~\ref{conclusion} we use these estimates to conclude the proof of Theorem~\ref{bigM}.

\section{The induced map}\label{induced}

Obviously, to prove Theorem~\ref{bigM} it is enough to consider integer values of $M$. So, from now one we shall assume that  $M\in\mathbb{N}$.

In this section, we claim the existence of an auxiliary induced map, with certain properties, see Proposition~\ref{induk} below.

First, we cut each horizontal strip $P_k$  (see (\ref{strips})) into rectangles 
\begin{equation}\label{initial}
R^k_r=\{z \colon r\leq\mathrm{Re}(z) < r+1\}\cap P_k
\end{equation} 

For an arbitrary $M\in\mathbb{N}$   denote by $Z_M$ the family of all  rectangles $R^k_r$ intersecting  $W\cap Y_M$.

Note that our assumption  ~(\ref{thinn}) on $W$  implies that the number $n(r)$ of rectangles in $Z_M$ intersecting the lines $\mathrm{Re}(z)=\pm r$, satisfies
$$\log n(r)=o(\log(r)).$$

The map $F$ will be defined in the union $W_M$ of all rectangles intersecting the set $W\cap Y_M$:
$$W_M=\bigcup_ {R^k_r\in Z_M}R^k_r.$$

The set $W_M$ splits naturally into two subsets:

$$W_M^+=W_M\cap\{z:\mathrm{Re}(z)>0\},$$

$$W_M^-=W_M\cap\{z:\mathrm{Re}(z)<0\}.$$

The next Proposition summarizes the required properties of the induced map $F$.

\begin{stw}\label{induk} Let $W$ be a thin set. Let $\lambda$ be a supergrowing parameter; put $f=f_\lambda$.  For every $\delta >0$ there exist   $M\in \mathbb{N}$  
and   a map   $F$ defined in $W_M$,  with the following properties:
\begin{itemize}
\item{}$F$ is constructed with appropriate iterates of the map $f$ and for every rectangle $R_r^k\in Z_M$  $F_{|R_r^k}=f^n$ for some $n\in\mathbb{N}$.
\item{} \begin{equation}\label{pokr}
F(\Lambda_{W}\cap Y_M)\subset \Lambda_{W}\cap Y_M
\end{equation}
\item{} For every rectangle $R^k_r\in Z_M$ the set $R^k_r\cap F^{-1}(W_M)$ can be covered by a family  $\mathcal{F}^k_r$ of disjoint subsets of $R^k_r$ such that each set $Q\in\mathcal{F}^k_r$ is mapped by $F$ bijectively onto 
its image $F(Q)$, which is contained in some rectangle $\widehat{R}^m_s\in Z_M$.
Moreover, the holomorphic branch of $F^{-1}$ mapping $F(Q)$ back onto $Q$ is well defined in a neighbourhood of  the whole rectangle $\widehat R^m_s$ and 

\begin{equation}\label{maineq}
\sum_{Q\in {\mathcal F}^k_r}\sup_{w\in Q}|F'(w)|^{-(1+\delta)}<\frac{1}{2}
\end{equation}

\end{itemize}
\end{stw}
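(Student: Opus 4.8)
The plan is to build the induced map $F$ separately on the two pieces $W_M^+$ and $W_M^-$, exploiting the identity $f'=f$, so that $|f'(z)|=|\lambda|e^{\mathrm{Re}(z)}$ governs both the expansion and the contraction. On $W_M^+$ (where $\mathrm{Re}\ge M$) the map $f$ is already enormously expanding, so I would simply set $F=f$ there. On $W_M^-$ (where $\mathrm{Re}\le -M$) a single iterate contracts, since $f$ sends the far-left region into a tiny annulus around $0$; here I would take $F=f^{N}$ for a return time $N=N(r)$ depending only on the source rectangle $R^k_r$. The crucial dynamical input is that, for a point of $\Lambda_W$, once its orbit enters the small annulus near $0$ it must shadow the escaping singular orbit $\beta_n$ of (\ref{imre}), i.e. $f^{j}(z)\approx\beta_{j-1}$ for $j\ge 1$, with the shadowing error expanding by the factors $|\beta_j|$ at each step. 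I would define $N$ to be the first moment at which the orbit has climbed back to real part at least $|\mathrm{Re}(z)|$, plus a bounded number of extra steps; the super-growing condition (\ref{super}) then forces one of these extra iterates to expand by a factor of $e^{c\,e^{|r|}}$-type, overwhelming the initial contraction $|\lambda|e^{r}$.

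The verification of the structural bullets is then mostly bookkeeping. Property (\ref{pokr}) holds because every iterate of a point of $\Lambda_W$ stays in $W$, and the stopping rule guarantees the image lands back in $Y_M$, hence in $W\cap Y_M$. To produce the partition $\mathcal F^k_r$ I would pull back the rectangle grid $Z_M$ under $F$ and intersect with the domains of univalence; since $f$ restricted to a single strip $P_k$ is a conformal bijection onto $\mathbb C^{*}$, each inverse branch of $f^{N}$ is a composition of logarithms defined on large simply connected domains avoiding $0$, so it extends to a neighbourhood of the whole target rectangle $\widehat R^m_s$, as required. This extension is exactly what I need for the Koebe distortion theorem, which makes $\sup_Q|F'|$ comparable to $|F'|$ at any chosen point of $Q$; hence throughout I may replace the supremum in (\ref{maineq}) by the derivative at a single representative point.

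For the positive side the estimate is straightforward. Writing $\rho=|\lambda|e^{r}$, every branch $Q\subset R^k_r$ satisfies $|F'|=|\lambda|e^{\mathrm{Re}}\asymp\rho$, independently of the target rectangle. The image $f(R^k_r)$ is an annulus of modulus $\asymp\rho$, and by the cone condition (\ref{stozek}) it meets $W$ only near the real axis; the number of target rectangles it covers is therefore at most $C\rho\cdot\max_s n(s)$, which by the thinness condition (\ref{thinn}) is $\rho^{1+o(1)}$. Consequently the sum in (\ref{maineq}) is $\lesssim\rho^{1+o(1)}\rho^{-(1+\delta)}=\rho^{-\delta+o(1)}$, which tends to $0$ as $r\to\infty$, uniformly for $r\ge M$, and is $<\tfrac12$ once $M$ is large enough.

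The negative side is the crux. Telescoping gives the clean formula $|(f^{N})'(z)|=\prod_{i=1}^{N}|f^{i}(z)|$, whose only contracting factor is the first one, $|f(z)|=\rho\le|\lambda|e^{-M}$; all later factors are $\approx|\beta_{j}|$ and grow super-exponentially by (\ref{super}). The heart of the argument is to show that the stopping rule makes the total product $D=|F'|$ not merely larger than $1$ but in fact tending to infinity with $M$: this is where the super-growing hypothesis is indispensable, since it is precisely the bound $\alpha_{n+1}\ge ce^{\alpha_n}$ that lets a single recovered iterate dominate the accumulated contraction $e^{r}$. Granting this, the image $f^{N}(R^k_r)$ has diameter $\asymp D$, so (again by (\ref{stozek}) and (\ref{thinn})) it covers at most $D^{1+o(1)}$ target rectangles; since all branches from a fixed $R^k_r$ use the same number $N$ of iterates, bounded distortion yields $|F'|\asymp D$ on each, and the sum is $\lesssim D^{1+o(1)}D^{-(1+\delta)}=D^{-\delta+o(1)}\to 0$. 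I expect the real difficulties to be concentrated entirely here: first, proving the shadowing of $\beta_n$ and extracting the uniform lower bound $D\to\infty$ from (\ref{super}) together with the cone condition; and second, making the branch count and the distortion control rigorous across the long composition $f^{N}$, in particular checking that the inverse branches genuinely extend past the singular value $0$ during the near-origin passage.
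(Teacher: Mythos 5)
Your overall architecture is the paper's: $F=f$ on $W_M^+$ with the target count controlled by thinness, and on $W_M^-$ a return map $F=f^{l+2}$ sending the far-left part of $W$ through a pullback of a ball $B_l=B(\beta_l,D|\beta_l|)$ around the singular orbit and back out. Your telescoping $|(f^{N})'(z)|=\prod_i|f^{i}(z)|$, with the initial contraction $|\lambda|e^{r}$ cancelled (in fact only up to the constant $D/(4eL)$, not reversed) by the shadowing factors $|\beta_1|\cdots|\beta_l|$ and the decisive expansion supplied by the single last iterate from $\mathrm{Re}\approx\alpha_l$, is exactly estimate (\ref{pierwszykrok}); the positive-side estimate is also the paper's.

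The gap is in the negative-side summation, which you yourself flag as the crux. You bound the sum by (number of reachable target rectangles)\,$\times\,D^{-(1+\delta)}$ with a single quantity $D\asymp|F'|$ per source rectangle, justified by ``bounded distortion yields $|F'|\asymp D$ on each.'' That fails: Koebe controls the distortion \emph{within} one branch, but across the different branches of $F|_{R^k_r}$ the last factor $|f'(f^{l+1}(w))|=|\lambda|e^{\mathrm{Re}\,f^{l+1}(w)}$ varies by a factor as large as $e^{\alpha_l/2}$, because $f^{l+1}(R^k_r)$ can fill a macroscopic portion of $B_l$ (for the ``shallow'' rectangles of $V_l$ its diameter is comparable to $D|\beta_l|$, the radius of $B_l$), and the real parts in $B_l$ range over $[\tfrac34\alpha_l,\tfrac54\alpha_l]$. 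If you pair the worst-case derivative $\approx e^{\frac34\alpha_l}$ with the total count of rectangles of $Z_M$ met by $f(B_l)$, which is $e^{\frac54\alpha_l(1+o(1))}$ by thinness, the sum is bounded only by $e^{\alpha_l(\frac12-\frac34\delta+o(1))}$, which diverges for $\delta<2/3$ — precisely the small $\delta$ you need. The count of reachable targets has to be matched branch-by-branch to that branch's own last-step expansion: group the branches by the intermediate unit square $R^{k'}_{r'}\subset B_l$ they pass through, apply the positive-side estimate (Lemma~\ref{lemri}, at level $\mathrm{Re}\ge\tfrac12\alpha_l$) to each such square to get a sub-sum $\le\hat Ce^{-\delta'\alpha_l/2}$, multiply by the $O(|\beta_l|^2)$ such squares, and only then invoke super-growing to absorb $|\beta_l|^2$ into $e^{-\delta'\alpha_l/2}\approx|\beta_{l+1}|^{-\delta'/2}$. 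That two-level summation is the missing step; the rest of your outline (the extension of inverse branches past $0$ via the fact that $2B_l$ meets the singular orbit only at $\beta_l$, and the verification of (\ref{pokr}) via the cone condition (\ref{stozek})) matches the paper.
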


\section{Definition and the estimates for the map F in $W_M^+$}\label{ppositive}

Let $\lambda$ be a parameter satisfying the assumption of Proposition~\ref{induk}.
We use the simplified notation $f=f_\lambda$.
The map  $F$  is defined in $W_M^+$  simply as $F(x)=f(x)$. Thus, $F$ is one-to-one, if restricted to any
rectangle $R_r^k$.
\begin{lem}\label{4.1} For $M$ large enough the inclusion 
$$F(\Lambda_W\cap Y_M^+)\subset \Lambda_W\cap Y_M$$
holds.
\end{lem}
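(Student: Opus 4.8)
The plan is to exploit the fact that on $W_M^+$ the induced map is just a single application of $f$, so the statement reduces to a direct estimate on $f(z)=\lambda e^z$. Take any $z\in\Lambda_W\cap Y_M^+$, so that $\mathrm{Re}(z)\ge M$ and $f^n(z)\in W$ for all $n\ge 0$. The membership $f(z)\in\Lambda_W$ is immediate: since $f^n(f(z))=f^{n+1}(z)\in W$ for every $n\ge 0$, forward invariance of $\Lambda_W$ (noted after Definition~\ref{lambda}) gives $f(z)\in\Lambda_W$. Thus the only real content is to show that $f(z)\in Y_M$, i.e.\ that $|\mathrm{Re}(f(z))|\ge M$ once $M$ is chosen large enough.

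The key observation is that, although the \emph{modulus} of $f(z)$ is easy to control, its \emph{real part} could a priori be tiny (if the imaginary part of $z$ places $\lambda e^z$ near the imaginary axis); it is exactly the thinness hypothesis that rules this out. First I would compute, writing $x=\mathrm{Re}(z)\ge M$,
$$|f(z)|=|\lambda|\,e^{x}\ge |\lambda|\,e^{M}.$$
Next, since $z\in\Lambda_W$ forces $f(z)\in W$ (take $n=0$ in the definition of $\Lambda_W$ applied to the point $f(z)$, or simply use $f(z)\in\Lambda_W\subset W$), the cone condition (\ref{stozek}) of Definition~\ref{thin} applies to the point $f(z)$ and yields
$$|\mathrm{Re}(f(z))|+1>\frac{|f(z)|}{K}\ge\frac{|\lambda|\,e^{M}}{K}.$$

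Combining the two displays gives
$$|\mathrm{Re}(f(z))|>\frac{|\lambda|\,e^{M}}{K}-1,$$
and since $e^{M}/M\to\infty$, the right-hand side exceeds $M$ for all sufficiently large $M$; concretely it suffices that $|\lambda|\,e^{M}\ge K(M+1)$. For such $M$ we conclude $|\mathrm{Re}(f(z))|\ge M$, hence $f(z)\in Y_M$, and together with $f(z)\in\Lambda_W$ this gives $f(z)\in\Lambda_W\cap Y_M$, as required. The step I expect to carry the whole argument is the application of (\ref{stozek}) to $f(z)$: this is where thinness is used in an essential way, converting the exponential lower bound on $|f(z)|$ into an exponential lower bound on $|\mathrm{Re}(f(z))|$, and everything else is routine.
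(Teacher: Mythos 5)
Your proposal is correct and follows essentially the same argument as the paper: forward invariance handles the $\Lambda_W$ part, and the cone condition (\ref{stozek}) applied to $f(z)\in W$, combined with $|f(z)|=|\lambda|e^{\Re(z)}\ge|\lambda|e^{M}$, yields $|\Re(f(z))|\ge M$ for $M$ large. The only difference from the paper's computation is an immaterial constant ($-1$ versus $-1/K$ after dividing by $K$).
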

\begin{proof}
It is obvious that $f(\Lambda_W)\subset \Lambda_W$. So, $F(\Lambda_W\cap Y_M^+)\subset \Lambda_W$.

Let $z\in \Lambda_W\cap Y_M^+$. Since $f(z)\in W$, we have
$$K(|\Re(f(z))|+1)>|f(z)|=|\lambda|e^{\Re(z)}\ge|\lambda|e^M.$$
where $K$ is the constant coming from the definition of a thin set. 

Thus,
$$|\Re(F(z))|=|\Re(f(z))|\ge \frac{1}{K}|\lambda|e^M-\frac{1}{K}$$

Thus, if $M$ is large enough, we conclude that  $|\Re (F(z))|\ge M$.
\end{proof}
\

 Now, we turn to the  proof of (\ref{maineq}).
  More precisely, we shall estimate the sum in  (\ref{maineq}), corresponding to the rectangles $R^k_r\in Z_M$, for $r$ positive
(thus: larger than or equal to $M$). The estimate is straightforward  and  based on Lemma~\ref{lemri}
below.


\begin{lem} \label{lemri} Fix  some $\delta\in(0,1)$, and  put $\delta'=\frac{\delta}{2}$.
There exist  $\widehat C$ (independent of $\delta$)  and $M>0$  (depending on $\delta$) such that, for $r\geq M$, and every rectangle $R_r^k$
\begin{equation}
\sum_{\widehat{R}^l_s\cap f(R_r^k)\neq \emptyset} \sup_{w\in f^{-1} \left (\widehat{R}^l_s\right )\cap R_r^k} |f'(w)|^{-(1+\delta)}\leq \widehat Ce^{-r\delta'}
\end{equation}
where the summation runs over all rectangles $\widehat R^l_s\in Z_M$ intersecting $f(R_r^k)$.
\end{lem}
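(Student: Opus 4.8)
The plan is to reduce everything to two elementary facts: the modulus of the derivative $|f'(w)| = |\lambda| e^{\Re(w)}$ depends only on $\Re(w)$, and the image of a single rectangle $R_r^k$ under $f$ is a full annulus. First I would record that for every $w \in R_r^k$ one has $\Re(w) \ge r$, hence $|f'(w)| = |\lambda| e^{\Re(w)} \ge |\lambda| e^r$, so that each summand obeys the uniform bound
\[
\sup_{w \in f^{-1}(\widehat R^l_s) \cap R_r^k} |f'(w)|^{-(1+\delta)} \le (|\lambda| e^r)^{-(1+\delta)} = |\lambda|^{-(1+\delta)} e^{-r(1+\delta)}.
\]
Consequently the whole sum is at most $N(r) \cdot |\lambda|^{-(1+\delta)} e^{-r(1+\delta)}$, where $N(r)$ denotes the number of rectangles $\widehat R^l_s \in Z_M$ meeting $f(R_r^k)$. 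The entire problem thus becomes the estimation of $N(r)$.

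Next I would describe $f(R_r^k)$ geometrically. Since each strip $P_k$ has height $2\pi$, as $\mathrm{Im}(z)$ runs through $P_k$ the factor $e^{i\,\mathrm{Im}(z)}$ sweeps the full unit circle, while $|\lambda| e^{\Re(z)}$ ranges over $[|\lambda| e^r, |\lambda| e^{r+1})$; hence $f(R_r^k)$ is exactly the annulus $\{w : |\lambda| e^r \le |w| < |\lambda| e^{r+1}\}$. Any $w$ in this annulus that also lies in $W$ must, by the cone condition \eqref{stozek}, satisfy $|\Re(w)| \ge |w|/K - 1 \ge |\lambda| e^r / K - 1$, while trivially $|\Re(w)| \le |w| < |\lambda| e^{r+1}$. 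Therefore only rectangles whose real coordinate $s$ has $|s|$ in an interval of length at most $|\lambda| e^{r+1}$ can contribute.

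Then I would invoke the thinness hypothesis through the counting estimate $\log n(s) = o(\log s)$ coming from \eqref{thinn}. At each admissible real coordinate the number of rectangles of $Z_M$ is at most $n(|s|)$, and since every relevant $|s|$ lies between $|\lambda| e^r/K - 1$ and $|\lambda| e^{r+1}$, one has $\log |s| = r + O(1)$, so uniformly over this range $n(|s|) \le e^{\varepsilon(r)(r + O(1))}$ with $\varepsilon(r) \to 0$. Summing over at most $|\lambda| e^{r+1}$ values of $s$ (both signs being covered by the definition of $n$) yields $N(r) \le C|\lambda| e^r e^{o(r)}$. Combining this with the derivative bound gives a total of at most $2e\, |\lambda|^{-\delta} e^{o(r)} e^{-r\delta}$; since $|\lambda|^{-\delta} \le \max(1, |\lambda|^{-1})$ for $\delta \in (0,1)$, the constant $\widehat C = 2e \max(1, |\lambda|^{-1})$ is independent of $\delta$, and for $M$ large enough (depending on $\delta$) the subexponential factor $e^{o(r)}$ is absorbed into $e^{r\delta'} = e^{r\delta/2}$, leaving the claimed bound $\widehat C e^{-r\delta'}$.

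The main obstacle is the uniformity of the $o(r)$ estimate: I must ensure that the $o(\cdot)$ in $\log n(s) = o(\log s)$ translates into a factor $e^{o(r)}$ that is uniform across \emph{all} real coordinates $s$ touched by the annulus, and then that this single subexponential factor can be swallowed by $e^{r\delta'}$ once $r \ge M$. The geometric input, namely that $f(R_r^k)$ is a full annulus and that $|f'|$ is constant on vertical lines, is precisely what makes the counting clean; the only genuine care needed is the bookkeeping of the $\pm s$ ranges and the verification that the final constant does not secretly depend on $\delta$.
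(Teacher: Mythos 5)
Your proof is correct and takes essentially the same route as the paper's: bound each summand using $|f'(w)|=|f(w)|=|\lambda|e^{\Re(w)}\ge|\lambda|e^{r}$, count the rectangles of $Z_M$ meeting the annulus $f(R_r^k)$ via the thinness of $W$, and absorb the resulting subexponential factor into $e^{-r\delta'}$ by taking $M$ large. The only cosmetic difference is that the paper encodes the slice count through $w(s)\le s^{\delta'}$ for $s\ge M$ while you carry the $n(s)=e^{o(\log s)}$ form to the end; the bookkeeping is the same.
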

\begin{proof}
 We assume now that $M$ is so large that $w(s)<s^{\delta'}$ for all $s\ge M$.

If  $z\in R_r^k$, we have
\begin{equation}\label{poch}
|f'(z)|=|f(z)| = |\lambda|e^{\mathrm{Re}(z)}\geq |\lambda| e^r
\end{equation}

The number of rectangles  $\widehat{R}^l_s\in Z_M$ for which  $\widehat{R}^l_s\cap f(R_r^k)\neq \emptyset$ can be estimated (very roughly) by
$$\sum_{M\le s\le |\lambda| e^{r+1}} w(s)\le  \sum_{M\le s\le |\lambda| e^{r+1}}s^{\delta'}\le Ce^{(1+\delta')(r+1)}.$$
Thus,
$$\sum_{\widehat{R}^l_s\cap f(R_r^k)\neq \emptyset} \sup_{w\in f^{-1} \left (\widehat{R}^l_s\right )
\cap R_r^k} |f'(w)|^{-(1+\delta)}\le C e^{(1+\delta')(r+1)}| \lambda|^{-(1+\delta)}e^{-r(1+\delta)}=\widehat Ce^{-r\delta'}$$

where $C$ and $\widehat C$ are  constants independent of $M$ and $\delta$ (see Figure 1).
\end{proof}

Assuming that $M$ is so large that $\hat C e^{-M\delta'}<\frac{1}{2}$ and denoting by  ${\mathcal F}^k_r$ the family of all sets of the form $Q=f^{-1}(\widehat R^l_s)\cap R^k_r$,  we thus get   the inequality (\ref{maineq}):

\begin{equation}\label{positive}
\sum_{Q\in{\mathcal F}^k_r}\sup_{w\in Q}|F'(w)|^{-(1+\delta)}\le \hat C e^{-r\delta'}\le\frac{1}{2}
\end{equation}

\begin{xrem}
Note that, in this part of the proof we did not use the "super-growing" property.
 We used only the fact that the domain of the map is contained in ${\rm Re} z\ge M$ and that the set $W$ is thin.
It is worth to observe that  above calculation is close in spirit   to the argument contained in \cite{ka}.
\end{xrem}
\begin{figure}[h]
\scalebox{1} 
{
\begin{pspicture}(0,-1.9378124)(11.221875,1.91)
\psbezier[linewidth=0.04,arrowsize=0.05291667cm 2.0,arrowlength=1.4,arrowinset=0.4]{->}(3.66,0.76)(4.338992,1.244762)(5.5556498,1.2)(6.48,0.7)
\psline[linewidth=0.02cm,arrowsize=0.05291667cm 2.0,arrowlength=1.4,arrowinset=0.4]{->}(8.4,-1.9)(8.4,1.9)
\psline[linewidth=0.02cm,arrowsize=0.05291667cm 2.0,arrowlength=1.4,arrowinset=0.4]{->}(6.6,-0.3)(10.6,-0.3)
\pscircle[linewidth=0.04,dimen=outer](8.4,-0.3){1.0}
\pscircle[linewidth=0.04,dimen=outer](8.4,-0.3){1.4}
\psbezier[linewidth=0.04,linestyle=dashed,dash=0.16cm 0.16cm](6.6,0.18571429)(8.388235,-0.1)(9.058824,0.1)(10.4,0.7)
\psbezier[linewidth=0.04,linestyle=dashed,dash=0.16cm 0.16cm](6.6,-0.75714284)(8.0,-0.1)(10.0,0.1)(10.4,-0.5)
\usefont{T1}{ppl}{m}{n}
\rput(2.5392187,0.45){$R_r^k$}
\usefont{T1}{ptm}{m}{n}
\rput(5.021406,1.55){$f$}
\usefont{T1}{ptm}{m}{n}
\rput(10.591406,0.13){$W$}
\psline[linewidth=0.02cm,arrowsize=0.05291667cm 2.0,arrowlength=1.4,arrowinset=0.4]{->}(1.8,-1.9)(1.8,1.9)
\psline[linewidth=0.02cm,arrowsize=0.05291667cm 2.0,arrowlength=1.4,arrowinset=0.4]{->}(0.0,-0.3)(4.0,-0.3)
\psframe[linewidth=0.04,dimen=outer](2.45,0.19)(2.2,-0.06)
\psline[linewidth=0.01cm](9.48,0.32)(9.7,0.1)
\psline[linewidth=0.01cm](7.3,0.08)(7.44,-0.06)
\psline[linewidth=0.01cm](7.14,0.14)(7.4,-0.16)
\psline[linewidth=0.01cm](7.06,0.08)(7.38,-0.26)
\psline[linewidth=0.01cm](7.04,-0.08)(7.36,-0.42)
\psline[linewidth=0.01cm](7.04,-0.24)(7.26,-0.46)
\psline[linewidth=0.01cm](7.02,-0.4)(7.12,-0.5)
\psline[linewidth=0.01cm](9.36,0.3)(9.72,-0.06)
\psline[linewidth=0.01cm](9.26,0.28)(9.64,-0.12)
\psline[linewidth=0.01cm](9.3,0.08)(9.48,-0.1)
\usefont{T1}{ptm}{m}{n}
\rput(10.381406,-1.71){$f(R_r^k)$}
\psline[linewidth=0.03cm,arrowsize=0.05291667cm 2.0,arrowlength=1.4,arrowinset=0.4]{->}(10.32,-1.5)(9.36,-0.94)
\end{pspicture} 
}
\caption{$F$ in $W_M^+$}

\end{figure}
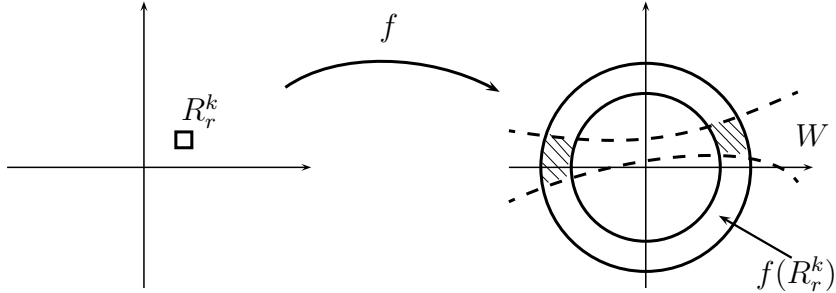
\section{Definition and the estimates for the map F in $W_M^-$.}\label{nnegative}
We keep the assumption that $\lambda$ is a supergrowing parameter and, as before, we abbreviate the notation writing $f:=f_\lambda$.

We start with the following easy observation.
\begin{stw}
If the singular value $0$ does not belong to the set $\Lambda_W$, then there exists $M>0$ such that the set $\Lambda_W$ is contained in the right half-plane ${\rm Re}z\ge -M$.
\end{stw}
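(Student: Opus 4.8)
The plan is to exploit the fact that $f(z)=\lambda e^z$ crushes the far left half-plane into an arbitrarily small neighbourhood of the singular value $0$, and then to use the hypothesis $0\notin\Lambda_W$ to produce a finite iterate that pushes such points out of $W$.

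First I would record the elementary contraction estimate: for every $z$ one has $|f(z)|=|\lambda|e^{\mathrm{Re}(z)}$, which tends to $0$ as $\mathrm{Re}(z)\to-\infty$, uniformly in $\mathrm{Im}(z)$. Hence points with sufficiently negative real part are mapped by $f$ arbitrarily close to $0=\beta_0$, regardless of their imaginary part.

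Next I would unpack the hypothesis. The condition $0\notin\Lambda_W$ means precisely that some finite iterate of the singular value leaves $W$, i.e. there is an index $n_0\ge 0$ with $\beta_{n_0}=f^{n_0}(0)\notin W$. Since $W$ is closed, the point $\beta_{n_0}$ admits an open neighbourhood disjoint from $W$; and since $f^{n_0}$ is entire, hence continuous at $0$, there is a radius $\rho>0$ such that $f^{n_0}$ maps the disc $\{|w|<\rho\}$ entirely into the complement of $W$. Combining the two observations, I would fix $M$ so large that $|\lambda|e^{-M}<\rho$. Then any $z$ with $\mathrm{Re}(z)<-M$ satisfies $|f(z)|<\rho$, so that $f^{n_0+1}(z)=f^{n_0}(f(z))\notin W$. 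Such a $z$ cannot belong to $\Lambda_W$, and therefore $\Lambda_W\subset\{\mathrm{Re}(z)\ge -M\}$, which is the assertion.

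There is essentially no serious obstacle in this step, consistent with its billing as an ``easy observation'': it relies only on the closedness of $W$, the continuity of the single finite iterate $f^{n_0}$, and the explicit form of $f$ forcing the left half-plane toward the singular value. The super-growing assumption on $\lambda$ is not used here. The only matter requiring minimal care is the degenerate case $n_0=0$ (when $0$ itself lies outside $W$), which the very same argument covers by taking $f^{n_0}=\mathrm{id}$ and $\rho$ equal to the radius of a ball around $0$ missing $W$.
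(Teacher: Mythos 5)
Your argument is correct and coincides with the paper's own proof: both deduce from $0\notin\Lambda_W$ that some iterate $f^{n_0}$ maps a small ball $B(0,\rho)$ off $W$, and then use $|f(z)|=|\lambda|e^{\mathrm{Re}(z)}$ to push every point of a far left half-plane into that ball, excluding it from $\Lambda_W$. No issues.
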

\begin{proof}  If $0\notin\Lambda_W$ then there exists $k\ge 0$ such that $f^k(0)\notin W$, and, consequently, there exists  a ball $B(0,\eta)$
such that $f^k(z)\notin W$ for every $z\in B(0,\eta)$. Consequently, $B(0,\eta)\cap \Lambda_W=\emptyset$.
Thus, if ${\rm Re}w<\log \eta-\log|\lambda|$ then $w\notin \Lambda_W$ (since $f(w)\in B(0,\eta)$).
\end{proof}
Thus, the part of the proof contained in this section  is void in the case when $0\notin \Lambda_W$.
The definition of the map $F$ and the proof of  (\ref{maineq}) for $r$ negative is more involved and it uses the "super-growing" condition.

\

The proof of the following technical lemma is straightforward and left to the reader.
\begin{lem}\label{superepsilon}
Let $\lambda$ be a super-growing parameter, $\alpha_n={\rm Re}f^n_\lambda(0)$. Then for every $\varepsilon>0$ there exists $ N$ such that for all $n> N$
\begin{equation}\label{eps}
\alpha_1+\alpha_2+\dots+\alpha_n<\varepsilon\alpha_{n+1}
\end{equation}
\end{lem}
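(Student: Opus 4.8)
The plan is to prove the slightly stronger statement that $(\alpha_1+\cdots+\alpha_n)/\alpha_{n+1}\to 0$ as $n\to\infty$, which immediately gives the lemma. The underlying mechanism is that the super-growing condition forces the sequence to increase so rapidly that each term dominates the sum of all its predecessors; the technical device is to rewrite the sum of the $\alpha_k$ as a geometric series controlled by the consecutive ratio $\alpha_{n+1}/\alpha_n$.

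First I would note that the assumption $\alpha_{n+1}\ge ce^{\alpha_n}$ together with $\alpha_n\to\infty$ yields $\alpha_{n+1}/\alpha_n\ge ce^{\alpha_n}/\alpha_n\to\infty$, since $e^t/t\to\infty$. Thus, given $\varepsilon>0$, I may fix a number $q>1$ with $\frac{1}{q-1}\le\frac{\varepsilon}{2}$ and then choose $N_0$ so large that for every $k\ge N_0$ one has both $\alpha_k>0$ and $\alpha_{k+1}\ge q\,\alpha_k$.

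Next I would split the partial sum as $\alpha_1+\cdots+\alpha_n=A+\sum_{k=N_0+1}^n\alpha_k$, where $A=\alpha_1+\cdots+\alpha_{N_0}$ is a fixed constant. For the tail, iterating the ratio bound gives $\alpha_k\le q^{-(n-k)}\alpha_n$ for $N_0\le k\le n$, so $\sum_{k=N_0+1}^n\alpha_k\le\alpha_n\sum_{j\ge 0}q^{-j}=\frac{q}{q-1}\alpha_n$; combining this with $\alpha_n\le\alpha_{n+1}/q$ bounds the tail by $\frac{1}{q-1}\alpha_{n+1}\le\frac{\varepsilon}{2}\alpha_{n+1}$. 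For the head, since $\alpha_{n+1}\to\infty$, the fixed constant $A$ satisfies $A\le\frac{\varepsilon}{2}\alpha_{n+1}$ for all $n$ beyond some $N_1\ge N_0$. Setting $N=N_1$ then gives $\alpha_1+\cdots+\alpha_n\le\varepsilon\,\alpha_{n+1}$ for all $n>N$.

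I do not expect a genuine obstacle: the content is the elementary fact that a sequence with $\alpha_{n+1}/\alpha_n\to\infty$ has partial sums negligible compared with the next term. The only points requiring mild care are that the early $\alpha_k$ need not be positive or monotone, which is why I isolate the fixed head $A$ and run the geometric estimate only from an index $N_0$ past which positivity and the ratio bound hold; and that the number of summands grows with $n$, which is precisely why the tail must be controlled by a geometric bound rather than termwise.
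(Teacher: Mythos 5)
Your proof is correct; the paper explicitly leaves this lemma to the reader (``The proof of the following technical lemma is straightforward and left to the reader''), and your argument --- deducing $\alpha_{n+1}/\alpha_n\to\infty$ from the super-growing condition, isolating a fixed head $A$, and bounding the tail by a geometric series --- is exactly the standard argument intended. The only cosmetic point is that you conclude with $\alpha_1+\cdots+\alpha_n\le\varepsilon\,\alpha_{n+1}$ while the statement asks for strict inequality; this is immediate since your two bounds already deliver $\tfrac{\varepsilon}{2}\alpha_{n+1}$ each and at least one of them can be taken strict for $n$ large.
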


The Koebe distortion theorem implies that for every univalent map $f$ defined on some ball $B(z,r)$, the distortion of $f'$, restricted to the ball $B(z,\frac{r}{2})$ is
bounded by some constant, independent of the map. This constant will be denoted by $L$.
Recall the "super-growing" equivalent 
conditions (\ref{super}), (\ref{super2}) and (\ref{superreal}). Put
$$D=\frac{1}{4}\frac{c}{|\lambda|}\le\frac{1}{4}$$
where the constant $c$ comes from the super-growing conditions.

It follows from the condition 
(\ref{super2}) that there exists $l_0$ such that, for all $l\ge l_0$, the ball $B(\beta_l, 2D |\beta_l|)$
contains only one point of the trajectory of the point $0$. Put 
$$B_l:=B(\beta_l, D |\beta_l|)$$

Thus, the  inverse branches of $f^l$ are well defined in  $B_l$, with distortion bounded by $L$.

\

Let  $f^{-l}_0$ denote the branch of $f^{-l}$ following the backward trajectory $\beta_l, \beta_{l-1},\dots \beta_0=0$;  we put $\widetilde{B}_l=f^{-l}_0(B_l)$. Then $\widetilde {B}_l$ is a topological disc containing the point $0$.
Since $|(f^l)'(0)|=|\beta_1|\cdot\dots \cdot |\beta_l|$ and since the branch $f^{-l}_0$, sending $\beta_l$ to $0$,
is also well defined on the ball twice larger $2\cdot B_l=B(\beta_l, 2D|\beta_l|)$,
the set $\widetilde{B}_l$ is contained in the ball

\begin{equation}\label{balll}
B\left (0, \frac{L D|\beta_l|}{|\beta_1|\cdots|\beta_{l-1}||\beta_l|}\right )=B\left(\frac{LD}{|\beta_1|\cdots|\beta_{l-1}|}\right)
\end{equation}

 and contains the ball
\begin{equation}\label{ball}
B\left (0, \frac{\frac{1}{4}D|\beta_l|}{|\beta_1|\cdots|\beta_{l-1}||\beta_l|}\right )=B\left(0,\frac{\frac{1}{4}D}{|\beta_1|\cdots|\beta_{l-1}|}\right ).
\end{equation}

\
Next,  put 
$\widetilde G_l = f^{-1}(\frac{1}{e}\widetilde {B}_l)$.
where $\frac{1}{e} \tilde B_l$ is the image of $\tilde B_l$ under the rescaling $z\mapsto \frac{1}{e}z$.   So, $\tilde G_l$ is an unbounded set containing some left halfplane. Finally, put  $G_l = \widetilde G_l\cap W$  (see Figure 2).
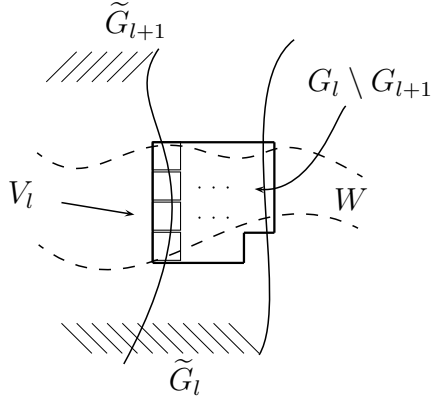
\begin{figure}[h]
\scalebox{1} 
{
\begin{pspicture}(0,-2.558125)(6.8228126,2.558125)
\psbezier[linewidth=0.02,linestyle=dashed,dash=0.16cm 0.16cm](0.6009375,0.5796875)(1.3698274,-0.0403125)(1.7444148,0.9596875)(2.7893164,0.5996875)(3.834218,0.2396875)(3.4409375,1.1196876)(4.87912,0.2596875)
\psbezier[linewidth=0.02,linestyle=dashed,dash=0.16cm 0.16cm](0.6209375,-0.6403125)(2.0609374,-1.7603126)(3.2209375,0.3996875)(4.8209376,-0.4203125)
\usefont{T1}{ptm}{m}{n}
\rput(4.7323437,-0.0303125){$W$}
\psbezier[linewidth=0.02](3.9809375,2.0796876)(3.1209376,1.3996875)(3.9609375,-1.3603125)(3.5209374,-2.1003125)
\psbezier[linewidth=0.02](2.2009375,1.9596875)(1.5609375,1.0396875)(3.2209375,0.5196875)(1.7409375,-2.2203126)
\usefont{T1}{ptm}{m}{n}
\rput(2.5523438,-2.39){$\widetilde{G}_l$}
\usefont{T1}{ptm}{m}{n}
\rput(1.9,2.33){$\widetilde{G}_{l+1}$}
\psline[linewidth=0.01cm](3.5209374,-2.0803125)(3.1209376,-1.6803125)
\psline[linewidth=0.01cm](3.3209374,-2.0803125)(2.9209375,-1.6803125)
\psline[linewidth=0.01cm](3.1209376,-2.0803125)(2.7209375,-1.6803125)
\psline[linewidth=0.01cm](2.9209375,-2.0803125)(2.5209374,-1.6803125)
\psline[linewidth=0.01cm](2.7209375,-2.0803125)(2.3209374,-1.6803125)
\psline[linewidth=0.01cm](2.5209374,-2.0803125)(2.1209376,-1.6803125)
\psline[linewidth=0.01cm](2.1209376,-2.0803125)(1.7209375,-1.6803125)
\psline[linewidth=0.01cm](2.3209374,-2.0803125)(1.9209375,-1.6803125)
\psline[linewidth=0.01cm](1.9209375,-2.0803125)(1.5209374,-1.6803125)
\psline[linewidth=0.01cm](1.5209374,-2.0803125)(1.1209375,-1.6803125)
\psline[linewidth=0.01cm](1.7209375,-2.0803125)(1.3209375,-1.6803125)
\psline[linewidth=0.01cm](2.1209376,1.9196875)(1.7209375,1.5196875)
\psline[linewidth=0.01cm](1.9209375,1.9196875)(1.5209374,1.5196875)
\psline[linewidth=0.01cm](1.7209375,1.9196875)(1.3209375,1.5196875)
\psline[linewidth=0.01cm](1.5209374,1.9196875)(1.1209375,1.5196875)
\psline[linewidth=0.01cm](1.3209375,1.9196875)(0.9209375,1.5196875)
\psline[linewidth=0.01cm](1.1209375,1.9196875)(0.7209375,1.5196875)
\psline[linewidth=0.01cm](0.9209375,-1.6803125)(1.3209375,-2.0803125)
\usefont{T1}{ptm}{m}{n}
\rput(0.39234376,0.0296875){$V_l$}
\psline[linewidth=0.02cm,arrowsize=0.05291667cm 2.0,arrowlength=1.4,arrowinset=0.4]{->}(0.9209375,-0.0803125)(1.8809375,-0.2403125)
\usefont{T1}{ptm}{m}{n}
\rput(4.9823437,1.5096875){$G_l\setminus G_{l+1}$}
\psbezier[linewidth=0.02,arrowsize=0.05291667cm 2.0,arrowlength=1.4,arrowinset=0.4]{->}(4.6609373,1.1996875)(4.552619,1.1241076)(4.2409377,0.0396875)(3.4609375,0.0996875)
\psframe[linewidth=0.0139999995,dimen=outer](2.4939375,0.7326875)(2.1079376,0.3466875)
\psframe[linewidth=0.0139999995,dimen=outer](2.4939375,0.3326875)(2.1079376,-0.0533125)
\psframe[linewidth=0.0139999995,dimen=outer](2.4939375,-0.0673125)(2.1079376,-0.4533125)
\psframe[linewidth=0.0139999995,dimen=outer](2.4939375,-0.4673125)(2.1079376,-0.8533125)
\psline[linewidth=0.03cm](2.1209376,0.7196875)(3.7209375,0.7196875)
\psline[linewidth=0.03cm](3.7209375,0.7196875)(3.7209375,-0.4803125)
\psline[linewidth=0.03cm](3.7209375,-0.4803125)(3.3209374,-0.4803125)
\psline[linewidth=0.03cm](3.3209374,-0.4803125)(3.3209374,-0.8803125)
\psline[linewidth=0.03cm](3.3209374,-0.8803125)(2.1209376,-0.8803125)
\psline[linewidth=0.03cm](2.1209376,-0.8803125)(2.1209376,0.7196875)
\psline[linewidth=0.03cm,linestyle=dotted,dotsep=0.16cm](2.7209375,-0.2803125)(3.1209376,-0.2803125)
\psline[linewidth=0.03cm,linestyle=dotted,dotsep=0.16cm](2.7209375,0.1196875)(3.1209376,0.1196875)
\end{pspicture} 
}
\caption{Definitions of sets $V_l$, $\widetilde{G}_l$, etc.}

\end{figure}

\begin{lem}
For all $l$ large enough,
${\rm cl} \widetilde G_{l+1}\subset\widetilde  G_l$. Consequently, ${\rm cl}  G_{l+1}\subset\ G_l$.
\end{lem}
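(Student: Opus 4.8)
The plan is to reduce the claimed inclusion for the sets $\widetilde G_l$ to the corresponding nested inclusion for the discs $\widetilde B_l$, and then to verify the latter by comparing the two ball estimates (\ref{balll}) and (\ref{ball}). The point is that the definition $\widetilde G_l=f^{-1}(\tfrac1e\widetilde B_l)$ takes the \emph{full} preimage, so the whole comparison can be transported through $f$.

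First I would exploit continuity and surjectivity of $f(z)=\lambda e^z$ onto $\mathbb C\setminus\{0\}$. Since $f$ is continuous, $f(\mathrm{cl}\,\widetilde G_{l+1})\subset\mathrm{cl}\,f(\widetilde G_{l+1})$, and because $f$ is surjective onto $\mathbb C\setminus\{0\}$ we have $f(\widetilde G_{l+1})=f(f^{-1}(\tfrac1e\widetilde B_{l+1}))=\tfrac1e\widetilde B_{l+1}\setminus\{0\}$. As $\widetilde B_{l+1}$ is open (the image of an open ball under a holomorphic branch), removing the single point $0$ does not change the closure, so $f(\mathrm{cl}\,\widetilde G_{l+1})\subset\tfrac1e\,\mathrm{cl}\,\widetilde B_{l+1}$. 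Consequently, once we know $\mathrm{cl}\,\widetilde B_{l+1}\subset\widetilde B_l$, we obtain $f(\mathrm{cl}\,\widetilde G_{l+1})\subset\tfrac1e\widetilde B_l$, hence $\mathrm{cl}\,\widetilde G_{l+1}\subset f^{-1}(\tfrac1e\widetilde B_l)=\widetilde G_l$, which is exactly the assertion. Thus the entire problem is pushed onto the discs $\widetilde B_l$.

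Next I would prove $\mathrm{cl}\,\widetilde B_{l+1}\subset\widetilde B_l$ for large $l$ purely from the two-sided size estimates. By (\ref{balll}) applied with index $l+1$, the set $\widetilde B_{l+1}$ is contained in the ball centered at $0$ of radius $\frac{LD}{|\beta_1|\cdots|\beta_l|}$, so $\mathrm{cl}\,\widetilde B_{l+1}$ lies in the corresponding closed ball; by (\ref{ball}), $\widetilde B_l$ contains the open ball centered at $0$ of radius $\frac{\frac14 D}{|\beta_1|\cdots|\beta_{l-1}|}$. Hence it suffices to have the strict inequality of radii
$$\frac{LD}{|\beta_1|\cdots|\beta_l|}<\frac{\frac14 D}{|\beta_1|\cdots|\beta_{l-1}|},$$
which, after cancelling the common factor $D\,|\beta_1|\cdots|\beta_{l-1}|$, reads simply $|\beta_l|>4L$. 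Since $\alpha_l\to\infty$ and $|\beta_l|\ge\alpha_l$, we have $|\beta_l|\to\infty$, so this holds for all $l$ large enough; then the closed outer ball sits strictly inside the open inner ball, giving $\mathrm{cl}\,\widetilde B_{l+1}\subset\widetilde B_l$.

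Finally, the consequence for $G_l=\widetilde G_l\cap W$ is immediate from the fact that $W$ is closed:
$$\mathrm{cl}\,G_{l+1}=\mathrm{cl}(\widetilde G_{l+1}\cap W)\subset\mathrm{cl}\,\widetilde G_{l+1}\cap\mathrm{cl}\,W=\mathrm{cl}\,\widetilde G_{l+1}\cap W\subset\widetilde G_l\cap W=G_l.$$
I expect the only genuinely delicate point to be the first step: because $f$ is infinite-to-one, one must use carefully that $\widetilde G_l$ is the \emph{total} preimage of $\tfrac1e\widetilde B_l$ and that taking closures interacts correctly with the continuous surjection $f$. The radius comparison itself is routine once the super-growing divergence $|\beta_l|\to\infty$ is invoked.
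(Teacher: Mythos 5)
Your proposal is correct and follows essentially the same route as the paper: the paper's one-line proof likewise reduces the claim to $\mathrm{cl}\,\widetilde B_{l+1}\subset\widetilde B_l$ via the inclusions (\ref{balll}) and (\ref{ball}), leaving the radius comparison (which boils down to $|\beta_l|>4L$) and the transfer through the full preimage $f^{-1}$ implicit. You have simply filled in those routine details, including the harmless point about the deleted value $0$ and the closedness of $W$.
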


\begin{proof}
Indeed, it follows from  (\ref{balll}) and (\ref{ball}) that, for $l$ large,  ${\rm cl} \widetilde B_{l+1}\subset \widetilde B_l$.
\end{proof}

Finally, set $V_l$ to be the union of all  rectangles $R^k_r\in Z_M$ which intersect the set  $G_l \setminus G_{l+1}$. Thus the sets $V_l$ are not disjoint,
but, for large $l$, one rectangle $R^k_r$ may intersect  two sets $V_l$ at most (see Figure 2).

Given $l_0\in\mathbb{N}$, we put   $M=[\alpha_{l_0}]+1$.
Note that, by the above estimates,  we have:
\begin{lem}\label{lem12}
If $l_0$ is large enough then
$$W_M^- \subset \bigcup_{l=l_0+1}^\infty V_l.$$
\end{lem}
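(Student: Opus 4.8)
The plan is to prove Lemma~\ref{lem12} by showing that the sets $\widetilde{G}_l$ exhaust the left part of the plane as $l\to\infty$, and that $W_M^-$ sits entirely to the left of the ``boundary'' determined by the choice $M=[\alpha_{l_0}]+1$. The key geometric fact I would establish first is a quantitative description of how far to the left the sets $\widetilde{G}_l$ reach. Recall that $\widetilde{G}_l=f^{-1}\bigl(\tfrac{1}{e}\widetilde{B}_l\bigr)$, and that $\widetilde{B}_l$ contains the ball of radius $\tfrac14 D/(|\beta_1|\cdots|\beta_{l-1}|)$ around $0$ by~(\ref{ball}). Since $f(z)=\lambda e^z$, the preimage of a ball $B(0,\rho)$ under $f$ is the region $\{z:|\lambda|e^{\Re(z)}<\rho\}=\{z:\Re(z)<\log\rho-\log|\lambda|\}$ (up to the inner structure), so $\widetilde{G}_l$ contains a left half-plane $\{\Re(z)<\ell_l\}$ where $\ell_l$ is controlled from below by $\log\bigl(\tfrac14 D/(e|\beta_1|\cdots|\beta_{l-1}|)\bigr)-\log|\lambda|$. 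I would turn this into the statement that $\ell_l\to-\infty$ but, crucially, that $\ell_l$ decreases only moderately.

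The heart of the argument is to compare this half-plane boundary $\ell_l$ with the vertical line $\Re(z)=-M=-([\alpha_{l_0}]+1)$ defining $W_M^-$. The point is that $\log(|\beta_1|\cdots|\beta_{l-1}|)=\sum_{j=1}^{l-1}\log|\beta_j|$, and by the super-growing condition~(\ref{superreal}) each $\log|\beta_j|$ is comparable to $\alpha_j$ (up to the factor $|\lambda|/c$ and lower-order terms). Here is where I would invoke Lemma~\ref{superepsilon}: the partial sums $\alpha_1+\cdots+\alpha_{l-1}$ are negligible compared to $\alpha_l$, hence $\sum_{j=1}^{l-1}\log|\beta_j|$ is of order $\alpha_{l-1}$ at most a small multiple of $\alpha_l$, so $\ell_{l+1}$ stays bounded below by roughly $-C\alpha_l$ for some moderate constant. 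Choosing $l_0$ large and setting $M=[\alpha_{l_0}]+1$, I would verify that the boundary $\ell_{l_0+1}$ lies to the \emph{right} of $-M$, i.e.\ $\widetilde{G}_{l_0+1}\supset\{\Re(z)<-M\}$. Consequently every point of $W_M^-$, which by definition satisfies $\Re(z)\le -M$, lies in $\widetilde{G}_{l_0+1}$, hence in $G_{l_0+1}=\widetilde{G}_{l_0+1}\cap W$.

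From the inclusion $W_M^-\subset G_{l_0+1}$ the conclusion follows by the nesting $\mathrm{cl}\,G_{l+1}\subset G_l$ established in the preceding Lemma: a point of $G_{l_0+1}$ either lies in some difference $G_l\setminus G_{l+1}$ for $l\ge l_0+1$, or else lies in the intersection $\bigcap_l G_l$. I would argue the latter set is empty (or at worst a single point not in $W_M$) because the radii in~(\ref{balll}) shrink to $0$, so $\widetilde{G}_l$ cannot contain any fixed point with bounded real part eventually; every $z\in W_M^-$ therefore falls out of $G_{l+1}$ for $l$ large enough and thus lies in some $G_l\setminus G_{l+1}$. Since $V_l$ was defined to contain all rectangles meeting $G_l\setminus G_{l+1}$, and $z$ lies in such a rectangle, we get $z\in V_l$ for some $l\ge l_0+1$, which is exactly the claimed covering $W_M^-\subset\bigcup_{l=l_0+1}^\infty V_l$.

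The main obstacle I anticipate is the bookkeeping in the comparison of scales: one must ensure that the logarithmic losses from the Koebe constant $L$, the factor $\tfrac{1}{e}$, and the product $|\beta_1|\cdots|\beta_{l-1}|$ all remain controlled by $\alpha_l$ rather than overwhelming it. This is precisely the role of Lemma~\ref{superepsilon}, which guarantees $\sum_{j<l}\alpha_j=o(\alpha_l)$; the delicate point is to confirm that the half-plane reached by $\widetilde{G}_{l_0+1}$ genuinely extends past $-M=-[\alpha_{l_0}]-1$ and not merely past $-\alpha_{l_0}+o(\alpha_{l_0})$ in the wrong direction. Once the inequality $\ell_{l_0+1}<-M$ is pinned down with the correct constants, the rest is the soft topological nesting argument above.
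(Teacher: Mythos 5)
Your proposal is correct and follows essentially the same route as the paper: compute the left half-plane contained in $\widetilde G_{l}$ from the inner radius in (\ref{ball}), use Lemma~\ref{superepsilon} to see that this half-plane reaches past $-\alpha_{l_0}\ge -M+1$ for $l=l_0+1$, and then cover $G_{l_0+1}$ by the differences $G_l\setminus G_{l+1}$. You even supply a detail the paper leaves implicit, namely that $\bigcap_l \widetilde G_l=\emptyset$ (since $\tfrac1e\widetilde B_l$ shrinks to $\{0\}$ and $0$ has no $f$-preimage), which justifies the final inclusion $G_{l_0+1}\subset\bigcup_{l\ge l_0+1}V_l$.
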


\begin{proof}
Using the fact that $\widetilde{B}_l$ contains the ball $B(0, \frac{\frac{1}{4}D}{|\beta_1|\cdots|\beta_{l-1}|})$,
we conclude that the set  $\widetilde G_l$ contains the left halfplane

\begin{equation}\label{zaw}
\{\mathrm{Re}(z)<-\log 4 -1+\log D-(\alpha_{l-2}+\dots +\alpha_0 )-l\log|\lambda|\}
\end{equation}

 It follows from (\ref{zaw}) and Lemma~\ref{superepsilon} that, for large $l$,  $\widetilde G_l$ contains the left
halfplane 
$$\{{\rm Re}z\le-\alpha_{l-1}\}.$$

Thus, $$W_M^-=W\cap \{{\rm Re}z\le-([\alpha_{l_0}]+1)\}\subset W\cap \tilde G_{l_0+1}= G_{{l_0+1}}\subset\bigcup_{l={l_0+1}}^\infty V_l.$$
\end{proof}

\

Below, we  define the map $F$, separately on each set  $V_l$.

For $z\in V_l$ define $F(z) =   f \circ f^l \circ f = f^{l+2}$.
Note that the set $\tilde G_l$ is mapped by $f$ onto $\frac{1}{e}\widetilde B_l$.
Thus, $f$ maps $V_l$ into $\widetilde B_l$ and we have

\begin{equation}\nonumber
    V_l\stackrel{f}{\longrightarrow}\widetilde{B}_l\stackrel{f^l}{\longrightarrow} B_l\stackrel{f}{\longrightarrow} f(B_l).
\end{equation}
Note that the map is neither  onto nor injective.
Obviously,  the map $F{|_{V_l}}$ has a holomorphic extension to $\mathbb{C}$.
\begin{figure}[h]
\scalebox{1} 
{
\begin{pspicture}(0,-1.718125)(13.71,1.718125)
\pscustom[linewidth=0.02,fillstyle=solid,fillcolor=lightgray]
{
\newpath
\moveto(10.89,0.3596875)
\lineto(10.94,0.3396875)
\curveto(10.965,0.3296875)(10.99,0.2946875)(10.99,0.2696875)
\curveto(10.99,0.2446875)(10.995,0.1996875)(11.0,0.1796875)
\curveto(11.005,0.1596875)(11.03,0.1346875)(11.05,0.1296875)
\curveto(11.07,0.1246875)(11.095,0.0946875)(11.1,0.0696875)
\curveto(11.105,0.0446875)(11.11,-0.0053125)(11.11,-0.0303125)
\curveto(11.11,-0.0553125)(11.115,-0.1003125)(11.12,-0.1203125)
\curveto(11.125,-0.1403125)(11.14,-0.1853125)(11.15,-0.2103125)
\curveto(11.16,-0.2353125)(11.16,-0.2853125)(11.15,-0.3103125)
\curveto(11.14,-0.3353125)(11.12,-0.3853125)(11.11,-0.4103125)
\curveto(11.1,-0.4353125)(11.075,-0.4853125)(11.06,-0.5103125)
\curveto(11.045,-0.5353125)(11.01,-0.5753125)(10.99,-0.5903125)
\curveto(10.97,-0.6053125)(10.92,-0.6203125)(10.89,-0.6203125)
\curveto(10.86,-0.6203125)(10.805,-0.6203125)(10.78,-0.6203125)
\curveto(10.755,-0.6203125)(10.71,-0.6453125)(10.69,-0.6703125)
\curveto(10.67,-0.6953125)(10.625,-0.7153125)(10.6,-0.7103125)
\curveto(10.575,-0.7053125)(10.525,-0.7053125)(10.5,-0.7103125)
\curveto(10.475,-0.7153125)(10.43,-0.7303125)(10.41,-0.7403125)
\curveto(10.39,-0.7503125)(10.34,-0.7603125)(10.31,-0.7603125)
\curveto(10.28,-0.7603125)(10.23,-0.7603125)(10.21,-0.7603125)
\curveto(10.19,-0.7603125)(10.15,-0.7653125)(10.13,-0.7703125)
\curveto(10.11,-0.7753125)(10.06,-0.7553125)(10.03,-0.7303125)
\curveto(10.0,-0.7053125)(9.95,-0.6753125)(9.93,-0.6703125)
\curveto(9.91,-0.6653125)(9.865,-0.6653125)(9.84,-0.6703125)
\curveto(9.815,-0.6753125)(9.765,-0.6353125)(9.74,-0.5903125)
\curveto(9.715,-0.5453125)(9.68,-0.4753125)(9.67,-0.4503125)
\curveto(9.66,-0.4253125)(9.66,-0.3453125)(9.67,-0.2903125)
\curveto(9.68,-0.2353125)(9.69,-0.1253125)(9.69,-0.0703125)
\curveto(9.69,-0.0153125)(9.72,0.0746875)(9.75,0.1096875)
\curveto(9.78,0.1446875)(9.835,0.1996875)(9.86,0.2196875)
\curveto(9.885,0.2396875)(9.93,0.2646875)(9.95,0.2696875)
\curveto(9.97,0.2746875)(10.005,0.2996875)(10.02,0.3196875)
\curveto(10.035,0.3396875)(10.025,0.3646875)(10.0,0.3696875)
\curveto(9.975,0.3746875)(9.975,0.3846875)(10.0,0.3896875)
\curveto(10.025,0.3946875)(10.09,0.3796875)(10.13,0.3596875)
\curveto(10.17,0.3396875)(10.23,0.3346875)(10.25,0.3496875)
\curveto(10.27,0.3646875)(10.315,0.3996875)(10.34,0.4196875)
\curveto(10.365,0.4396875)(10.42,0.4646875)(10.45,0.4696875)
\curveto(10.48,0.4746875)(10.54,0.4746875)(10.57,0.4696875)
\curveto(10.6,0.4646875)(10.655,0.4596875)(10.68,0.4596875)
\curveto(10.705,0.4596875)(10.75,0.4546875)(10.77,0.4496875)
\curveto(10.79,0.4446875)(10.83,0.4296875)(10.85,0.4196875)
\curveto(10.87,0.4096875)(10.9,0.3896875)(10.93,0.3596875)
}
\pscustom[linewidth=0.02,fillstyle=solid]
{
\newpath
\moveto(10.49,0.3196875)
\lineto(10.45,0.3096875)
\curveto(10.43,0.3046875)(10.4,0.2846875)(10.39,0.2696875)
\curveto(10.38,0.2546875)(10.35,0.2346875)(10.33,0.2296875)
\curveto(10.31,0.2246875)(10.265,0.2146875)(10.24,0.2096875)
\curveto(10.215,0.2046875)(10.165,0.1996875)(10.14,0.1996875)
\curveto(10.115,0.1996875)(10.08,0.1796875)(10.07,0.1596875)
\curveto(10.06,0.1396875)(10.025,0.1196875)(10.0,0.1196875)
\curveto(9.975,0.1196875)(9.93,0.1146875)(9.91,0.1096875)
\curveto(9.89,0.1046875)(9.86,0.0796875)(9.85,0.0596875)
\curveto(9.84,0.0396875)(9.835,-0.0053125)(9.84,-0.0303125)
\curveto(9.845,-0.0553125)(9.85,-0.1053125)(9.85,-0.1303125)
\curveto(9.85,-0.1553125)(9.85,-0.2053125)(9.85,-0.2303125)
\curveto(9.85,-0.2553125)(9.85,-0.3053125)(9.85,-0.3303125)
\curveto(9.85,-0.3553125)(9.86,-0.3953125)(9.87,-0.4103125)
\curveto(9.88,-0.4253125)(9.905,-0.4503125)(9.92,-0.4603125)
\curveto(9.935,-0.4703125)(9.97,-0.4853125)(9.99,-0.4903125)
\curveto(10.01,-0.4953125)(10.05,-0.5053125)(10.07,-0.5103125)
\curveto(10.09,-0.5153125)(10.125,-0.5303125)(10.14,-0.5403125)
\curveto(10.155,-0.5503125)(10.195,-0.5603125)(10.22,-0.5603125)
\curveto(10.245,-0.5603125)(10.295,-0.5653125)(10.32,-0.5703125)
\curveto(10.345,-0.5753125)(10.395,-0.5803125)(10.42,-0.5803125)
\curveto(10.445,-0.5803125)(10.495,-0.5753125)(10.52,-0.5703125)
\curveto(10.545,-0.5653125)(10.59,-0.5503125)(10.61,-0.5403125)
\curveto(10.63,-0.5303125)(10.66,-0.5003125)(10.67,-0.4803125)
\curveto(10.68,-0.4603125)(10.72,-0.4303125)(10.75,-0.4203125)
\curveto(10.78,-0.4103125)(10.835,-0.3903125)(10.86,-0.3803125)
\curveto(10.885,-0.3703125)(10.915,-0.3403125)(10.92,-0.3203125)
\curveto(10.925,-0.3003125)(10.93,-0.2553125)(10.93,-0.2303125)
\curveto(10.93,-0.2053125)(10.93,-0.1553125)(10.93,-0.1303125)
\curveto(10.93,-0.1053125)(10.935,-0.0603125)(10.94,-0.0403125)
\curveto(10.945,-0.0203125)(10.94,0.0196875)(10.93,0.0396875)
\curveto(10.92,0.0596875)(10.9,0.0996875)(10.89,0.1196875)
\curveto(10.88,0.1396875)(10.85,0.1696875)(10.83,0.1796875)
\curveto(10.81,0.1896875)(10.775,0.2096875)(10.76,0.2196875)
\curveto(10.745,0.2296875)(10.715,0.2546875)(10.7,0.2696875)
\curveto(10.685,0.2846875)(10.65,0.3046875)(10.63,0.3096875)
\curveto(10.61,0.3146875)(10.565,0.3146875)(10.54,0.3096875)
\curveto(10.515,0.3046875)(10.495,0.2996875)(10.5,0.2996875)
\curveto(10.505,0.2996875)(10.495,0.2996875)(10.45,0.2996875)
}
\pscircle[linewidth=0.02,dimen=outer,doubleline=true,doublesep=0.12,doublecolor=lightgray,fillstyle=solid](5.09,-0.1603125){0.6}
\psbezier[linewidth=0.04,arrowsize=0.05291667cm 2.0,arrowlength=1.4,arrowinset=0.4]{->}(1.37,0.18746702)(1.935215,0.7844494)(3.6139464,0.7293253)(4.45,0.0396875)
\usefont{T1}{ppl}{m}{n}
\rput(1.5392188,-0.45){$R_r^k\subset V_l$}
\usefont{T1}{ptm}{m}{n}
\rput(2.6114063,1.0496875){$f$}
\psframe[linewidth=0.04,dimen=outer,fillstyle=solid,fillcolor=lightgray](1.37,0.1196875)(1.09,-0.1603125)
\psdots[dotsize=0.12](5.09,-0.1603125)
\usefont{T1}{ptm}{m}{n}
\rput(5.2828126,-0.2003125){\small 0}
\psbezier[linewidth=0.04](5.69,-0.9603125)(6.69,-0.5603125)(5.89,-0.5603125)(6.49,-0.1603125)(7.09,0.2396875)(5.364269,-0.010963309)(6.09,0.6396875)(6.815731,1.2903383)(6.69,1.0396875)(5.69,0.8396875)(4.69,0.6396875)(5.647092,1.3294717)(4.69,0.8396875)(3.732908,0.34990335)(4.09,0.8396875)(4.09,0.4396875)(4.09,0.0396875)(3.29,0.0396875)(3.89,-0.3603125)(4.49,-0.7603125)(3.49,-0.9603125)(4.29,-1.1603125)(5.09,-1.3603125)(4.69,-1.3603125)(5.69,-0.9603125)
\usefont{T1}{ptm}{m}{n}
\rput(6.0914063,-1.2703125){$B_0$}
\psbezier[linewidth=0.04,arrowsize=0.05291667cm 2.0,arrowlength=1.4,arrowinset=0.4]{->}(6.45,0.5596875)(7.128992,1.0444494)(8.24565,1.0596875)(9.25,0.4796875)
\usefont{T1}{ptm}{m}{n}
\rput(7.6514063,1.3296875){$f^l$}
\usefont{T1}{ptm}{m}{n}
\rput(11.421406,-1.4903125){$B_l$}
\pscircle[linewidth=0.04,dimen=outer](10.49,-0.1603125){1.2}
\psdots[dotsize=0.12](10.49,-0.1603125)
\psframe[linewidth=0.03,dimen=outer](11.055,0.4446875)(10.685,0.0746875)
\psbezier[linewidth=0.04,arrowsize=0.05291667cm 2.0,arrowlength=1.4,arrowinset=0.4]{->}(11.17,0.34355846)(11.871254,0.84395784)(12.817966,0.8596875)(13.69,0.2196875)
\usefont{T1}{ptm}{m}{n}
\rput(12.531406,1.0096875){$f$}
\psline[linewidth=0.02cm,arrowsize=0.05291667cm 2.0,arrowlength=1.4,arrowinset=0.4]{->}(10.89,1.2396874)(10.89,0.5396875)
\usefont{T1}{ptm}{m}{n}
\rput(10.9114065,1.5296875){$R^{k'}_{r'}$}
\end{pspicture} 
}

\caption{Defining $F$ in $W_M^-$}
\end{figure}
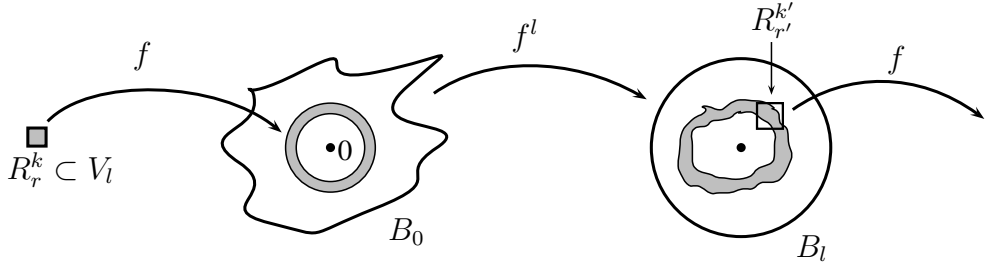

The following lemma can be proved similarly  as Lemma~\ref{4.1}.
\begin{lem}\label{lem13} Put $M=[\alpha_{l_0}]+1$. If $l_0$ is sufficiently large then, for all $l\ge l_0$,
$$F(\Lambda_W\cap  V_l)\subset \Lambda_W\cap Y_M.$$

\end{lem}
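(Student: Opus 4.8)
The plan is to mimic the proof of Lemma~\ref{4.1}, splitting the claim into two parts: the forward-invariance inclusion $F(\Lambda_W\cap V_l)\subset\Lambda_W$, which is immediate, and the ``escaping to large real part'' statement $F(\Lambda_W\cap V_l)\subset Y_M$, which is the only place where the super-growing condition genuinely enters.

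The first part I would dispose of at once. Since $F_{|V_l}=f^{l+2}$ and $\Lambda_W$ is forward invariant under $f$, hence under every iterate, every point of $\Lambda_W\cap V_l$ is mapped into $\Lambda_W$; in particular its image lies in $W$. For the second part I would track a point $z\in\Lambda_W\cap V_l$ along the factorization $V_l\stackrel{f}{\to}\widetilde B_l\stackrel{f^l}{\to}B_l\stackrel{f}{\to}f(B_l)$ and focus on the intermediate landing point $w:=f^{l+1}(z)\in B_l=B(\beta_l,D|\beta_l|)$. Its real part satisfies $\Re(w)\ge\alpha_l-D|\beta_l|$, and the super-growing condition in the form (\ref{superreal}), together with $D=\tfrac14\tfrac{c}{|\lambda|}$, yields $D|\beta_l|\le\tfrac14\alpha_l$, so that $\Re(w)\ge\tfrac34\alpha_l$. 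Applying $f$ one last time gives the crucial lower bound $|F(z)|=|f(w)|=|\lambda|e^{\Re(w)}\ge|\lambda|e^{\frac34\alpha_l}$, which is enormous for large $l$.

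To finish exactly as in Lemma~\ref{4.1}, I would use that $F(z)\in\Lambda_W\subset W$ and invoke the thinness estimate (\ref{stozek}): from $|F(z)|<K(|\Re(F(z))|+1)$ one extracts $|\Re(F(z))|>\tfrac1K|\lambda|e^{\frac34\alpha_l}-1$. Since $l\ge l_0$ forces $\alpha_l\ge\alpha_{l_0}$ while $M=[\alpha_{l_0}]+1$ depends only linearly on $\alpha_{l_0}$, the exponential right-hand side dominates $M$ as soon as $l_0$ is chosen large enough, giving $|\Re(F(z))|\ge M$ for all $l\ge l_0$, i.e. $F(z)\in Y_M$. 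The only obstacle beyond the argument of Lemma~\ref{4.1} is the bookkeeping of the intermediate point $w\in B_l$: one must use the super-growing property to prevent $B_l$ from reaching too far to the left of $\beta_l$, keeping $\Re(w)$ comparable to $\alpha_l$. Once this is secured, the final exponential blow-up under $f$ and the thinness of $W$ close the argument just as before.
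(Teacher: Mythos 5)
Your proposal is correct and follows essentially the same route as the paper: forward invariance gives the inclusion into $\Lambda_W$, the super-growing condition via $D|\beta_l|\le\tfrac14\alpha_l$ pins the intermediate point $f^{l+1}(z)\in B_l$ at real part at least $\tfrac34\alpha_l$, and the final application of $f$ combined with the cone condition (\ref{stozek}) forces $|\Re F(z)|\ge M$. The only cosmetic difference is that the paper weakens the exponent to $\tfrac12\alpha_l$ to absorb the constant $|\lambda|$, which changes nothing.
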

\begin{proof}
It is obvious that $F(\Lambda_W\cap V_l)\subset \Lambda_W$. 
Next, $f^{l+1}(V_l)\subset B_l$, so taking $w\in \Lambda_W\cap V_l$, we have for   $f^{l+1}(w)=z\in f^{l+1}(V_l)$:
$$\Re(z)\ge \Re\beta_l-D|\beta_l|\ge\alpha_l-\frac{1}{4}\alpha_l=
\frac{3}{4}\alpha_l$$

Thus,
$$|F(w)|=|f(f^{l+1})(w)|\ge e^{\frac{1}{2}|\alpha_l|}$$
if $l$ is large enough.
As in the proof of Lemma ~\ref{4.1} we conclude that 

$$|\Re F(w)|\ge \frac{1}{K}|F(w)|-\frac{1}{K}\ge \frac{1}{K}e^{\frac{1}{2}\alpha_l}-\frac{1}{K}>[\alpha_l]+1\ge[\alpha_{l_0}]+1$$
for all  $l\ge l_0$, if $l_0$ is large enough.
Therefore, $F(w)\in Y_M$.

\end{proof}

Now, we turn to the proof of the estimate  (\ref{maineq}) (for negative $r$).
Let us fix  some $\delta\in (0,1)$. Let, as in Section~\ref{ppositive}, $\delta'=\delta/2$.
Let $l_0$ be so large that  the statement of Lemma~\ref{lemri} holds with   $M$ replaced by $\frac{1}{2}[\alpha_{l_0}]$. 

Let $R^k_r$ be a rectangle in $V_l$. Then  $f^{l+1}$ maps $R^k_r$ bijectively onto its image, 
contained in $B_l$, and for $z\in R^k_r$ we have:

\begin{equation}\label{pierwszykrok}
|(f^{l+1})'(z)|=|f'(z)||(f^l)'(f(z))|=|f(z)||(f^l)'(f(z))|\ge\frac{\frac{1}{4e}D}{|\beta_1\beta_2\dots \beta_l|}\cdot L^{-1}|\beta_1\beta_2\dots \beta_l|=\frac{D}{4eL}
\end{equation}

The whole set $B_l$ is covered by $O(|\beta_l|)^2)$ rectangles $R^{k'}_{r'}$ of the initial partition (\ref{initial}).

On each such rectangle  $R^{k'}_{r'}$ the map $f$ is a bijection onto its image. Thus, as in Section~\ref{ppositive}, there a family $\mathcal{F}^{k'}_{r'}$ of disjoint sets $Q^{k'}_{r'}$, such that  the set $R^{k'}_{r'}\cap f^{-1}(W_M)$ can be written as a union of the sets  $Q^{k'}_{r'}\in\mathcal{F}^{k'}_{r'}$. Each set $Q^{k'}_{r'}$ is defined as the intersection $f^{-1}(\widehat R^m_s)\cap R^{k'}_{r'}$ where $\widehat R^m_s$ is some rectangle from the family $Z_M$.  Obviously, the inverse map $f^{-1}$ is well defined and holomorphic  
in a neighbourhood of every rectangle $\widehat R^m_s$.

Moreover, since the ball $B_l$ is contained in the set $\{\Re(z)>\frac{1}{2}\alpha_l\}$, for each such rectangle $R^{k'}_{r'}$ Lemma~\ref{lemri} tells  that
\begin{equation}\label{drugikrok}
\sum_{Q^{k'}_{r'}\in\mathcal{F}^{k'}_{r'}}\sup_{z\in Q^{k'}_{r'}}|f'(z)|^{-(1+\delta)}<\hat C e^{-\delta'\cdot \frac{1}{2}\alpha_l}
\end{equation} 

Now, recall that $f^{l+1}$ maps the rectangle $R^k_r$ bijectively onto its image, contained in $B_l$, see Figure 3.  Let $g= (f^{l+1}{|_{R^k_r}})^{-1}$
 be the inverse map.

Taking all the preimages $g(Q)$, $Q\in \mathcal{F}^{k'}_{r'}$ over all rectangles $R^{k'}_{r'}$ intersecting $B_l$, we obtain the family $\mathcal{F}^k_r$ of disjoint subsets of the rectangle $R^k_r$, such that 
the union $\bigcup_{Q\in \mathcal{F}_r^k} Q$ covers the whole set $R^k_r\cap F^{-1}(W_M)$ and each set 
$Q\in\mathcal{F}^k_r$ is mapped by $F=f^{l+2}$ bijectively onto its image, contained in some rectangle 
$\widehat R^m_s\in Z_M$.

Denote by $\mathcal G_l$ the family of all rectangles $R^{k'}_{r'}$ intersecting the ball $B_l$.
Using (\ref{pierwszykrok}) and (\ref{drugikrok}), we get the following estimate.
\begin{equation}
\aligned
&\sum_{Q\in\mathcal{F}^k_r}\sup_{w\in Q}|F'(w)|^{-(1+\delta)}\le\\
&\le \sup_{w\in R^k_r}|(f^{l+1})'(w)|^{-(1+\delta)}\cdot\sum_{R^{k'}_{r'}\in\mathcal{G}_l}\left (\sum_{Q^{k'}_{r'}\in\mathcal{F}^{k'}_{r'}}\sup_{z\in Q^{k'}_{r'}}|(f')(z)|^{-(1+\delta)}\right )\\
&\le (4eLD^{-1})^{(1+\delta)}(\# (\mathcal{G}_l)\cdot \hat Ce^{-\delta'\frac{1}{2}\alpha_l}
\endaligned
\end{equation}

Since the cardinality  $\# (\mathcal{G}_l)$ can be estimated  by $\mathcal O(|\beta_l|^2)$, we can write

$$\sum_{Q\in\mathcal{F}^k_r}\sup_{w\in Q}|F'(w)|^{-(1+\delta)}\le C_1 |\beta_l|^2e^{-\frac{1}{2}\delta'\alpha_l}=C_1|\beta_l|^2|\lambda|^{\frac{1}{2}\delta'}|\beta_{l+1}|^{-\frac{1}{2}\delta'} $$
where $C_1$ is another constant.
Using the "super-growing" condition we easily conclude that
the right-hand side of the above inequality can be estimated from above by
$$|\beta_{l+1}|^{-\frac{\delta'}{4}},$$
for all $l\ge l_0$, if $l_0$ is large enough. 
So, finally we get, for every rectangle $R^k_r$ intersecting $V_l$,

\begin{equation}\label{negative}
\sum_{Q\in\mathcal{F}^k_r}\sup_{w\in Q}|F'(w)|^{-(1+\delta)}\le  |\beta_{l+1}|^{-\frac{\delta'}{4}}<\frac{1}{2}
\end{equation}

This ends  the proof of Proposition~\ref{induk}, with $M=[\alpha_{l_0}]+1$, and   $F$ given by
\begin{equation}
F(z)=\left\{\begin{array}{lcl} f(z)&\mbox{for}& z\in W_M^+  \\ f^{l+2}(z)&\mbox{for}& z\in V_l, {\rm ~~} l\ge l_0+1  \end{array}\right..
\end{equation}
where $l_0$ is chosen large enough to satisfy all the estimates in the present Section (and, consequently, the estimate~(\ref{negative})).
The condition (\ref{pokr}) is satisfied because of Lemma~\ref{4.1}, Lemma ~\ref{lem12} and Lemma~\ref{lem13}. The condition (\ref{maineq}) is satisfied because of the estimates     
(\ref{positive}) and (\ref{negative}).
\begin{xrem}
Recall  that $W_M^-\subset \bigcup_{l=l_0}^\infty V_l$, so the map  $F$ is defined everywhere in $W_M$.
However, since the sets $V_l$ are not  disjoint, there are rectangles $R_r^k$ that are included in both $V_l$ and $V_{l+1}$. In such case we define the map $F$ on the rectangle  $R_r^k$, chosing arbitrarily one of two possible ways.
\end{xrem}

\section{Estimates of the dimension}\label{conclusion}

In this section, we finish the proof of Theorem~\ref{bigM}.
Fix an arbitrary $\delta>0$ and let $M$ be so large that the statement of Proposition~\ref{induk} holds true.

We shall define inductively, for every  rectangle $R^k_r\in Z_M$, and for every $n$, a cover $(\mathcal{F}_r^k)^n$  of the set $\Lambda_W\cap R^k_r$,
such that
\begin{equation}\label{cond}
\sum_{K\in(\mathcal{F}^k_r)^n}({\rm diam} K)^{1+\delta}<(2\pi+1)\cdot\frac{1}{2^n}.
\end{equation}

The cover $(\mathcal{F}^k_r)^1$ is defined simply as the family of sets $\mathcal{F}^k_r$  described in Proposition~\ref{induk}. 
The condition (\ref{cond}) is satisfied since
$$\sum_{Q\in\mathcal{F}^k_r}({\rm diam Q})^{1+\delta}\le (\sum_{Q\in\mathcal{F}^k_r}\sup_{w\in Q}|F'(w)|^{-(1+\delta)})\cdot (2\pi+1)\le\frac{1}{2}\cdot (2\pi+1).$$
Assume that the covers $(\mathcal{F}_r^k)^{n-1}$
have been already defined for every rectangle $R^k_r\in Z_M$.
So, fix some rectangle $R^k_r\in Z_M$, and, next,  some set $Q^k_r\in\mathcal F^k_r$.
It then follows from the construction that 
the  set $Q_r^k$ is mapped by $F$ bijectively onto 
its image $F(Q_r^k)$, which is contained in some rectangle $\widehat{R}^m_s\in Z_M$.

By the inductive assumption, for the  rectangle $\widehat R^m_s\in Z_M$  there is a cover $(\widehat {\mathcal{F}}^m_s)^{n-1}$ of the set  $\widehat R^m_s\cap \Lambda_W$, such that

$$\sum_{\widehat K\in (\widehat{\mathcal F}^m_s)^{n-1}}({\rm diam}(\widehat K)^{1+\delta}<(2\pi+1)\frac{1}{2^{n-1}}.$$

This allows us to define a cover $\mathcal{Q}^k_r$  of the set $Q^k_r\cap \Lambda_W$ as the family of all sets  of the form $F^{-1}_*(\widehat K)$
where $\widehat K\in (\widehat{\mathcal F}^m_s)^{n-1}$ and $F^{-1}_*$ is the inverse of the bijective map
$F:Q^k_r\to F(Q_r^k)$.

Obviously,

$$\sum_{\widehat K\in(\widehat{\mathcal{F}}^m_s)^{n-1}}{\rm diam}(F^{-1}_*(\widehat K))^{1+\delta}\le (2\pi+1)\frac{1}{2^{n-1}}\cdot \sup_{w\in Q^k_r} |F'(w)|^{-(1+\delta)}$$ 

Finally, we define the family  $(\mathcal{F}^k_r)^n$ as the union (over all sets $Q^k_r\in \mathcal F^k_r$ ) of all the covers  $\mathcal{Q}^k_r$,  described above. Since $\mathcal Q^k_r$ is a cover of $Q^k_r\cap \Lambda_W$, we obtain, by taking the union,   a cover of $R^k_r\cap \Lambda_W$.

Moreover, by (\ref{maineq}),

\begin{equation}
\sum_{K\in(\mathcal F^k_r)^n}({\rm diam}K)^{1+\delta}\le \sum_{Q^k_r\in\mathcal F^k_r}\sup_{w\in Q^k_r}|F'(w)|^{-(1+\delta)}\cdot (2\pi+1)\frac{1}{2^{n-1}}\le \frac{1}{2}\cdot (2\pi+1)\frac{1}{2^{n-1}}=(2\pi+1)\frac{1}{2^n}
\end{equation}

This ends the proof of the inequality $\dim_H(\Lambda_W\cap R^k_r)\le 1+\delta$. The conclusion
$\dim_H(\Lambda_W\cap Y_M)\le 1+\delta$ is immediate. Theorem~\ref{bigM} is proved.

\section{Application-Hausdorff dimension of indecomposable continua}\label{wnioski}
In this Section we apply Theorem~\ref{bigM} to show that the Hausdorff dimension of several indecomposable continua, described in Section~\ref{intro}, is equal to one.

The strategy is the following. Since every non-trivial  continuum in the plane has Hausdorff dimension at least one, it is enough to prove the upper bound on the dimension.
We shall use Theorem~\ref{bigM}, and, more precisely, Corollary~\ref{ubd}.
In order to make use of Corollary~\ref{ubd}, we shall check that our continuum, minus  at most one point, is contained
in the  set $\Lambda_{W,ubd}$ for some thin set $W$.
This  has already observed in several particular cases
(see \cite{devaneyjarque}, \cite{devaneyetds}, and \cite{forsterrempeschleicher}) for the most general statement).

For the completeness, we outline the proof of  Lemma~\ref{code} below. This is a particular case, needed in the present paper. We start with a standard fact. See e.g. \cite{urbanskizduniknonhyperbolic}, Lemma 2.2  for its proof.

\begin{lem}\label{standard}
Fix some parameter $\lambda$ for which  the singular trajectory $f^n_\lambda(0)$ escapes (i.e. $f^n_\lambda(z)\to\infty$).
Fix some $R>0$, and denote  by $F_R$ the set of points $z\in\mathbb{C}$ for which the whole trajectory $f^n_\lambda(z)$ remains in the closed ball $\overline  B(0, R)$. 
Then the map ${f_\lambda}_{|F_R}$ is expanding: there exist $c>0$ and $\gamma>1$ such that for every $z\in F_R$
$$|(f^n_\lambda)'(z)|\ge c\gamma^n.$$
\end{lem}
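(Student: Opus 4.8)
The plan is to deduce expansion from the fact that orbits lying in $F_R$ stay away from the postsingular set, and to measure expansion in an auxiliary hyperbolic metric rather than the Euclidean one. Note that the Euclidean identity $f_\lambda'=f_\lambda$ only gives $|(f_\lambda^n)'(z)|=\prod_{j=1}^n|f_\lambda^j(z)|$, which need not grow, since an orbit in $F_R$ may repeatedly pass close to the singular value $0$; so a direct Euclidean estimate will not work and a hyperbolic comparison is needed.

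First I would record that $F_R=\bigcap_{n\ge0}f_\lambda^{-n}(\overline B(0,R))$ is compact, being closed and contained in $\overline B(0,R)$. Set $P=\{f_\lambda^n(0):n\ge0\}$. Since the singular orbit escapes, $P$ is a closed discrete subset of $\mathbb{C}$ accumulating only at $\infty$, so $\Omega:=\mathbb{C}\setminus P$ is a hyperbolic domain (the plane minus at least two points). I claim $F_R\subset\Omega$: if some $f_\lambda^j(0)$ belonged to $F_R$, its forward orbit, being a tail of the singular orbit, would remain in $\overline B(0,R)$, contradicting escape; hence $F_R\cap P=\emptyset$. Moreover $0\in P$, so $\Omega\subset\mathbb{C}^*=\mathbb{C}\setminus\{0\}$, and $f_\lambda=\lambda\exp\colon\mathbb{C}\to\mathbb{C}^*$ is a universal covering. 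Now put $\Omega'=f_\lambda^{-1}(\Omega)$. Since every $f_\lambda^n(0)=f_\lambda(f_\lambda^{n-1}(0))$ lies in $P$, we have $P\cap\mathbb{C}\subset f_\lambda^{-1}(P)$, whence $\Omega'=\mathbb{C}\setminus f_\lambda^{-1}(P)\subset\Omega$; this inclusion is \emph{proper}, because each fibre $f_\lambda^{-1}(f_\lambda^n(0))$ is infinite (it consists of the $2\pi i$-translates of $f_\lambda^{n-1}(0)$) while $P$ is a single escaping sequence. As $F_R$ is forward invariant and sits inside $\Omega$, in fact $F_R\subset f_\lambda^{-1}(\Omega)=\Omega'$.

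Next I would use that the restriction $f_\lambda\colon\Omega'\to\Omega$ is a covering map, hence a local isometry for the hyperbolic metrics $\rho_{\Omega'}$ and $\rho_\Omega$, i.e. $\rho_\Omega(f_\lambda(z))\,|f_\lambda'(z)|=\rho_{\Omega'}(z)$ for $z\in\Omega'$. By the Schwarz--Pick lemma applied to the proper inclusion $\Omega'\hookrightarrow\Omega$ one has $\rho_{\Omega'}>\rho_\Omega$ strictly on $\Omega'$, and by compactness of $F_R\subset\Omega'$ there is $\tau>1$ with $\rho_{\Omega'}(z)\ge\tau\,\rho_\Omega(z)$ for all $z\in F_R$. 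Combining the two relations gives $\rho_\Omega(f_\lambda(z))\,|f_\lambda'(z)|\ge\tau\,\rho_\Omega(z)$ on $F_R$, so $f_\lambda$ expands the $\rho_\Omega$-metric by at least $\tau$ at every point of $F_R$. Since $F_R$ is forward invariant, the chain rule yields $|(f_\lambda^n)'(z)|\,\rho_\Omega(f_\lambda^n z)/\rho_\Omega(z)\ge\tau^n$, and as $\rho_\Omega$ is continuous and positive on the compact set $F_R$, dividing by the bounded ratio of densities gives $|(f_\lambda^n)'(z)|\ge c\gamma^n$ with $\gamma=\tau$ and $c=\inf_{F_R}\rho_\Omega/\sup_{F_R}\rho_\Omega>0$.

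The main obstacle is producing a \emph{uniform} expansion constant $\tau>1$, rather than a merely pointwise strict inequality. This rests on two ingredients: the proper inclusion $\Omega'\subsetneq\Omega$ combined with the sharp form of Schwarz--Pick (equality of hyperbolic densities at a single point would force the inclusion to be surjective), and the compactness of $F_R$, which upgrades the pointwise strict inequality to a uniform gap. A secondary, bookkeeping point to verify carefully is that $F_R$ avoids the entire postsingular set, so that it embeds in the hyperbolic domain $\Omega$; this is precisely where the escape hypothesis $f_\lambda^n(0)\to\infty$ is used.
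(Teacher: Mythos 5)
Your argument is correct: the hyperbolic-metric comparison on $\Omega=\mathbb{C}\setminus P$ versus $\Omega'=f_\lambda^{-1}(\Omega)\subsetneq\Omega$, combined with compactness of $F_R$, is exactly the standard route to uniform expansion here. The paper itself gives no proof and simply cites \cite{urbanskizduniknonhyperbolic}, Lemma 2.2, whose proof proceeds by essentially this same hyperbolic/Schwarz--Pick argument, so your proposal matches the intended one.
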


We now assume that the parameter $\lambda$ is chosen so that  the singular value $0$ escapes sufficiently fast. More precisely,  it follows from the classification of the escaping points (\cite{schleicherzimmer} Corollary 6.9) that the fact that $f^n_\lambda(0)$ escapes to $\infty$ implies that   there is a dynamic ray $g_{\underline r}$ such that $0=g_{\underline r}(t_0)$ for some $t_0\ge t_{\underline r}$. 
In Lemma~\ref{code} below, we shall assume additionally that $t_0>t_{\underline r}$, i.e. that the point $0$ is located on an (open) ray.

We  need the following.

\begin{lem}\label{code}
Fix some parameter $\lambda$, such that   the singular trajectory $f^n_\lambda(0)$ escapes, and $0=g_{ \underline r}(t_0)$ for some  $t_0>t_{\underline r}$.  
Let $g_{\underline s}:(t_{\underline s},\infty)$ be a dynamic ray. Denote by $G_s$ the closure of the set $\{g_{\underline s}(t),t>t_{\underline s}\}$ in $\mathbb{C}$. 
Then at most one point in $G_{\underline s}$ has bounded trajectory.
\end{lem}
\begin{proof} In the outline below we use notation from \cite{schleicherzimmer} and \cite{rempenonlanding}.
 To simplify the notation, we write below $g_{\underline r}$ to denote the arc $\{g_{\underline r}(t), t\ge t_0\}$. Put $f=f_\lambda$.
Now, it is convenient to use a "dynamical coding", as proposed in \cite{rempenonlanding}, Section 2.
The preimage $f^{-1}(g_{\underline r})$  is a union of countably many curves $g_{k\underline r}$, which   cut  the plane, defining   countably many open strips 
$S_k$, where  $S_k$ is bounded by $g_{k\underline r}$ and $g_{(k+1)\underline r}$. Denote also by $\overline S_k$ the closure of $S_k$.

Each strip $S_k$ is mapped by $f$ univalently onto $\mathbb{C}\setminus g_{\underline r}$. Denote by $f^{-1}_k$ the inverse map: $f^{-1}_k:\mathbb{C}\setminus g_{\underline r}\to S_k$.

Let $g_{\underline s}$ be an arbitrary  dynamic ray. 
It is easy to check that the ray $g_{\underline s}$ does not  intersect the ray $g_{k\underline r}$ unless $g_{\underline s}=g_{ k \underline r}$. Thus, all the points on the ray $g_{\underline s}$ share the same "dynamic address": there is a sequence $\underline{\tilde s}=\tilde s_0, \tilde s_1\dots$ such that for every point $z\in g_{\underline s}$  and every $n$ we have  $f^n (z)\in S_{\tilde s_n}$.         
Obviously, for every $z\in G_{\underline s}$ we have  $f^n (z)\in \overline S_{\tilde s_n}$.         

\

Now, let us fix some $R>0$, and consider the set  $F_R$ defined in Lemma~\ref{standard}.
Since all points in the rays $g_{k\underline r}$ escape to $\infty$
, and the trajectory of points $z\in F_R$ remains bounded, there exists $\delta>0$ such that the trajectory of every point  
$z\in F_R$ is $\delta$-separated 
from the boundary curves of all strips $S_k$, and, by the same reason, 
from the curves $f^k(g_{\underline 
r})$. 
(Here we use the fact that $f^k(z)\to\infty$ uniformly on the curve $g_{\underline r}:=g_{\underline r} 
([t_0,\infty))$, so, actually, only finitely many of curves $f^k(g_{\underline r})$ intersect the ball 
$\overline 
B(0,R)$.)

This easily implies the following: there exists a constant $L>0$ such that, for arbitrary two points $z,w\in F_R$, belonging to the same strip, there exist a topological disc $U=U_{z,w}$ on which each  composition
$$f^{-1}_{k_0}\circ f^{-1}_{k_1}\circ\dots \circ f^{-1}_{k_{n-1}}$$
is well defined, with distortion bounded by $L$.

 





Let $x, y\in F_R$. If $x,y \in G_{\underline s}$, then they have the same dynamic code $\tilde{\underline s}(x)=\tilde {\underline s}(y)=\tilde s_0, \tilde s_1, \dots, \tilde s_{n-1},\tilde s_n\dots$. Put $z=f^n(x)$, $w=f^n(y)$ and take the inverse branch following our coding
$$f^{-n}_*=f^{-1}_{\tilde s_0}\circ f^{-1}_{\tilde s_1}\circ \dots \circ f^{-1}_{\tilde s_{n-1}}.$$

Then $f^{-n}_*(z)=x$ and $f^{-n}_*(w)=y$ (because $\tilde{\underline s}(x)=\tilde {\underline s}(y)$).
Expanding property (Lemma~\ref{standard}) together with the bounded distortion property give

$$|x-y|=|f^{-n}_*(z)-f^{-n}_*(w)|\le \frac{L\cdot 2R}{c}\gamma^{-n}.$$
Since $n$ can be taken arbitrarily large, we get $x=y$.


\end{proof}
\
Note that, actually, we proved in Lemma~\ref{code} the following fact:
\begin{lem}\label{code2}
Under the assumption and notation of Lemma~\ref{code}, let $\tilde {\underline s}=(\tilde s_0,\tilde s_1,\tilde s_2\dots)$ be a dynamic address and  
let $H_{\tilde{\underline s}}$ be the set of points $z\in\mathbb{C}$ such that, for every $n\in\mathbb{N}$, $f^n_\lambda(z)\in\overline S_{\tilde s_n}$. 
Then at most one point in $H_{\tilde{\underline s}}$ has bounded trajectory.
\end{lem}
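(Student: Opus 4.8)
The plan is to observe that the proof of Lemma~\ref{code} did not actually use anything about the ray $g_{\underline s}$ beyond one single consequence: that all the points under consideration share one common dynamic address. Since the defining property of $H_{\tilde{\underline s}}$ is exactly that every $z\in H_{\tilde{\underline s}}$ has dynamic code $\tilde{\underline s}$, I expect the same contraction argument to apply verbatim, with $H_{\tilde{\underline s}}$ in place of $G_{\underline s}$. In other words, the content of the claim is that the earlier argument was already a statement about dynamic codes rather than about individual rays.

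Concretely, I would take two points $x,y\in H_{\tilde{\underline s}}$, both with bounded trajectory, and aim to show $x=y$; establishing this proves that at most one point of $H_{\tilde{\underline s}}$ has bounded trajectory. Since each of the two trajectories is bounded, I can pick $R>0$ so large that both $\{f^n(x)\}$ and $\{f^n(y)\}$ remain in $\overline B(0,R)$, so that $x,y\in F_R$ and Lemma~\ref{standard} provides the expanding estimate $|(f^n)'|\ge c\gamma^n$ on $F_R$. By definition of $H_{\tilde{\underline s}}$, for every $n$ both $f^n(x)$ and $f^n(y)$ lie in the same closed strip $\overline S_{\tilde s_n}$, which is precisely the hypothesis needed to run the inverse-branch construction of Lemma~\ref{code}. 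Setting $z=f^n(x)$, $w=f^n(y)$ and taking the branch $f^{-n}_*=f^{-1}_{\tilde s_0}\circ\dots\circ f^{-1}_{\tilde s_{n-1}}$, which recovers $x$ and $y$ respectively because the codes agree, the bounded distortion $L$ together with the expansion yields
$$|x-y|=|f^{-n}_*(z)-f^{-n}_*(w)|\le\frac{2LR}{c}\gamma^{-n}.$$
Letting $n\to\infty$ forces $x=y$.

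The only step needing genuine attention is the uniform bounded-distortion estimate for the compositions of inverse branches along the common code, i.e. the existence of discs $U_{z,w}$ on which the relevant composition is univalent with distortion bounded by $L$. The point to verify is that this estimate depends only on $R$ (through the $\delta$-separation of the bounded trajectories in $F_R$ from the boundary curves $g_{k\underline r}$ and from the escaping curves $f^k(g_{\underline r})$) and on the common code, but \emph{not} on the points lying on any particular ray. This is exactly what was shown while proving Lemma~\ref{code}: the separation argument there was stated for arbitrary points of $F_R$ lying in a common strip, so it transfers to $H_{\tilde{\underline s}}\cap F_R$ unchanged. I therefore expect no real obstacle beyond isolating this distortion estimate as the part of the earlier proof that is genuinely used.
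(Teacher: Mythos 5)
Your proposal is correct and matches the paper exactly: the authors themselves justify Lemma~\ref{code2} by remarking that the proof of Lemma~\ref{code} only ever used the common dynamic address of the points (via the separation of bounded trajectories in $F_R$ from the curves $g_{k\underline r}$, the bounded distortion of the inverse branches along that address, and the expansion from Lemma~\ref{standard}), not membership in the closure of a particular ray. You have correctly isolated the distortion estimate as the only step requiring verification that it transfers, and it does for precisely the reason you give.
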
  
We are ready to formulate the following corollaries.
 
\begin{twr}\label{contdevaney}
Let $f(z)=\exp(z)$ and let $\Lambda$ be the indecomposable continuum described in \cite{devaneyetds}:
$$\Lambda=\{z\in \mathbb{R}:\forall n\ge 0~~ {\rm Im} (f^n z)\in [0,\pi]\}$$
Then $\dim_H(\Lambda)=1$.
\end{twr}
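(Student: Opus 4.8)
The plan is to realise $\Lambda$ as the set $\Lambda_W$ associated with the horizontal strip $W=\{z:\mathrm{Im}(z)\in[0,\pi]\}$ and then to apply Corollary~\ref{ubd}. Since $\Lambda$ is a non-trivial continuum in the plane, the lower bound $\dim_H(\Lambda)\ge 1$ is automatic, so only $\dim_H(\Lambda)\le 1$ requires an argument. First I would note that $\Lambda$ coincides with $\Lambda_W$: the condition $\mathrm{Im}(f^n z)\in[0,\pi]$ for all $n\ge 0$ is exactly $f^n(z)\in W$ for all $n\ge 0$. Writing $\Lambda=\Lambda_{W,bd}\cup\Lambda_{W,ubd}$ as in the discussion preceding Corollary~\ref{ubd}, it then suffices to estimate the two pieces separately.

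For the unbounded part I would apply Corollary~\ref{ubd}, which requires checking its hypotheses. The strip $W$ is thin: condition~(\ref{stozek}) holds because for $z=x+iy$ with $0\le y\le\pi$ one has $|z|\le|x|+\pi$, so $|z|/(|\mathrm{Re}(z)|+1)$ is bounded; and condition~(\ref{thinn}) holds trivially since $w(R)=\pi$ is constant. Moreover $\lambda=1$ is super-growing: here $\beta_n=f^n(0)$ is real with $\beta_n\to+\infty$, so $\mathrm{Im}(\beta_n)\equiv 0$ is bounded and the Remark following Definition~\ref{defsuper} applies. Hence Corollary~\ref{ubd} gives $\dim_H(\Lambda_{W,ubd})\le 1$.

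It remains to show that $\Lambda_{W,bd}$ is negligible, and for this I would use Lemma~\ref{code2}. For $\lambda=1$ the singular value $0$ lies on the ray $g_{\underline 0}$, a tail of the positive real axis, and the curves $g_{k\underline r}$ forming $f^{-1}(g_{\underline 0})$ lie on the horizontal lines $\{\mathrm{Im}(z)=2k\pi\}$; accordingly the strips of Lemma~\ref{code2} are $S_k=\{2k\pi<\mathrm{Im}(z)<2(k+1)\pi\}$. Since $W=\{0\le\mathrm{Im}(z)\le\pi\}\subset\overline{S_0}$, every point of $\Lambda$ carries the constant dynamic code $\tilde{\underline s}=(0,0,0,\dots)$, i.e.\ $\Lambda\subset H_{\tilde{\underline s}}$. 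By Lemma~\ref{code2} at most one point of $H_{\tilde{\underline s}}$---hence at most one point of $\Lambda$---has bounded trajectory, so $\Lambda_{W,bd}$ consists of a single point and $\dim_H(\Lambda_{W,bd})=0$. Combining the two estimates yields $\dim_H(\Lambda)=\max\bigl(\dim_H(\Lambda_{W,ubd}),\dim_H(\Lambda_{W,bd})\bigr)\le 1$, which together with the lower bound proves $\dim_H(\Lambda)=1$.

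The step I expect to be the main obstacle is not the dimension estimate itself---that is delivered by Corollary~\ref{ubd}---but the verification that Lemma~\ref{code2} genuinely applies to $f_1=e^z$. This means confirming, via the classification of escaping points, that $0$ lies on an \emph{open} ray (so that $t_0>t_{\underline r}$ and the hypotheses of Lemma~\ref{code} are met) rather than being a mere landing point, and then correctly identifying the dynamic strips $\overline{S_k}$ and checking the inclusion $W\subset\overline{S_0}$. Once this symbolic bookkeeping is settled, the result follows from the two cited lemmas.
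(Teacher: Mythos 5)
Your proposal is correct and follows essentially the same route as the paper: identify $\Lambda$ with $\Lambda_W$ for the strip $W=\{0\le\mathrm{Im}(z)\le\pi\}$, note $W\subset\overline{S_0}$ so that Lemma~\ref{code2} bounds $\mathrm{card}(\Lambda\setminus\Lambda_{W,ubd})$ by one, and apply Theorem~\ref{bigM} via Corollary~\ref{ubd} together with the trivial lower bound for a non-trivial continuum. Your extra verifications (thinness of $W$, the super-growing property of $\lambda=1$, and that $0$ lies on an open ray) are exactly the hypotheses the paper leaves implicit, and they check out.
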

\begin{proof}
This is an immediate consequence of Theorem~\ref{bigM} and  Lemma~\ref{code2}:
Putting 
$$W=\{z:{\rm Im}(z)\in [0,\pi]\}$$
we see immediately that $\Lambda$ is contained in  the set of points whose trajectory remains in the closed dynamic strip  $\overline S_0=\mathbb{R}\times [0,2\pi]$.
(Note, besides,  that  points from $S_0\setminus W$ will leave $S_0$ immediately.)
Thus, using  using Lemma~\ref{code2} we see  that ${\rm {card}}(\Lambda\setminus  \Lambda_{W,ubd})\le 1$. 
Therefore, by Theorem~\ref{bigM}, $\dim_H(\Lambda)\le 1$. Since $\Lambda$ is a non-trivial continuum, $\dim_H(\Lambda)\ge 1$. This gives the required equality.
\end{proof}
\begin{rem}
The inclusion ${\rm {card}}(\Lambda\setminus  \Lambda_{W,ubd})\le 1$ can
be also deduced from the detailed  description of $\omega$- limit sets of the points in $\Lambda$, provided in \cite{devaneyetds}.
\end{rem} 
Now, let us consider a more general situation:
Assume that the parameter $\lambda$ satisfies $f^n_\lambda(0)\to\infty$.  In particular, this implies that the singular value $0$ is on a dynamic ray or is the landing point of such a ray.  Denote this ray, as in the proof of Lemma~\ref{code},  by $g_{\underline r}:(t_{\underline r},\infty)\to\mathbb{C}$. For such maps, 
L. Rempe provides the construction of uncountably many dynamically defined indecomposable continua. Each such a  continuum  is defined as the accumulation set of some dynamic ray (see \cite{rempenonlanding}, Theorem 1.2). Namely,  one considers the set $R_1$ of external addresses  of the following form:

$$\underline s = s(n_1, n_2, n_3,\dots )
:= T_1r_0r_1 . . . r_{n_1-1}T_{n_1}r_0r_1 . . . r_{n_2-1}T_{n_2}r_0r_1 . . . r_{n_3-1}T_{n_3}\dots$$
where $T_n := 2 + \max
_{k\le n}r_k$. A suitably chosen, uncountable subset $R\subset R_1$ has the required property: for every ray $\underline s\in R$ the accumulation set of the ray $g_{\underline s}$ is an indecomposable continuum.  

Before formulating the  result about the continua described in \cite{rempenonlanding}, we note the following auxiliary fact.

\begin{lem}\label{code3} 
Let $\lambda\in \mathbb{C}\setminus \{0\}$ is chosen so that the trajectory $f^n_\lambda(0)$ escapes to $\infty$.

Let $g_{\underline u}:(t_{\underline u},\infty)\to\mathbb{C}$ be a dynamic ray such that the sequence $u_n$ is bounded. If the ray $g_{\underline u}$ "lands", i.e. if there exists a finite limit $$\lim_{t\searrow t_{\underline u}}g_{\underline u}(t)=a\in\mathbb{C}$$ then
the trajectory $f^n_\lambda(a)$ does not escape to $\infty$.
\end{lem}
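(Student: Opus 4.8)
The plan is to transfer the question about the landing point $a$ to the dynamics on external addresses. First I would invoke the functional equation for exponential dynamic rays, $f(g_{\underline s}(t))=g_{\sigma\underline s}(\phi_{\underline s}(t))$, where $\sigma$ is the shift on addresses and $\phi_{\underline s}$ is an increasing homeomorphism between the corresponding potential intervals with $\phi_{\underline u}(t)\searrow t_{\sigma\underline u}$ as $t\searrow t_{\underline u}$. Combining this with the continuity of $f$ and passing to the limit $t\searrow t_{\underline u}$ shows that $f(a)=\lim_{t\searrow t_{\underline u}}g_{\sigma\underline u}(\phi_{\underline u}(t))$ is precisely the landing point of the ray $g_{\sigma\underline u}$. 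Iterating, $f^n(a)$ is the landing point of $g_{\sigma^n\underline u}$ for every $n\ge 0$.

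Next I would exploit the boundedness of the address. Since $|u_n|\le M$ for all $n$, every shifted address $\sigma^n\underline u$ has entries bounded by $M$, so every point of $g_{\sigma^n\underline u}$, and in particular its landing point $f^n(a)$, lies in the closed strip $\overline{P_{(\sigma^n\underline u)_0}}$ of index at most $M$. Hence the whole forward orbit of $a$ is confined to the fixed horizontal strip $\{\,|\mathrm{Im}(z)|\le(2M+1)\pi+|\mathrm{Arg}(\lambda)|\,\}$. Consequently, if $a$ were an escaping point, then $|f^n(a)|\to\infty$ with uniformly bounded imaginary parts would force $\Re(f^n(a))\to+\infty$; the competing possibility $\Re(f^n(a))\to-\infty$ is excluded, since it would push $f^{n+1}(a)=\lambda e^{f^n(a)}$ towards $0$, contradicting escape.

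To reach a contradiction, assume $f^n(a)\to\infty$. The remaining task is to contradict the \emph{landing} of $g_{\underline u}$, and this is where the structure theory of bounded-address rays from \cite{schleicherzimmer} and \cite{rempenonlanding} enters. The point is that, for a bounded address $\underline u$, the minimal potential $t_{\underline u}$ is exactly the escaping threshold for the itinerary $\underline u$: an escaping point whose strip itinerary equals $\underline u$ must sit strictly above threshold, i.e. it is an interior ray point $a=g_{\underline u}(\tau)$ with $\tau>t_{\underline u}$. Since $a$ is simultaneously the landing point $\lim_{t\searrow t_{\underline u}}g_{\underline u}(t)$ and the ray $t\mapsto g_{\underline u}(t)$ is injective on $(t_{\underline u},\infty)$, this forces $\tau=t_{\underline u}$, a value excluded from the domain of the ray. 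This contradiction yields $f^n(a)\not\to\infty$, as desired.

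I expect the crux, and the main obstacle, to be this last step: making precise that an escaping point with bounded itinerary $\underline u$ lies in the \emph{interior} of $g_{\underline u}$ rather than merely at its landing value, equivalently that the landing point sits at the non-escaping threshold $t_{\underline u}$. This is exactly the place where both hypotheses — boundedness of $\underline u$ and landing of the ray — must be used together, and it rests on the quantitative potential estimates in Schleicher--Zimmermann's classification of escaping points and on Rempe's analysis of (non)landing rays. By contrast, the functional-equation step, the confinement to a horizontal strip, and the determination of the sign of $\Re(f^n(a))$ are all routine.
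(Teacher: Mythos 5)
Your overall strategy coincides with the paper's: push the landing point forward (so that $f^n(a)$ is the landing point of $g_{\sigma^n\underline u}$ for every $n$), use the boundedness of the address to confine the orbit to a horizontal strip so that escape would force $\Re(f^n(a))\to+\infty$, and then derive a contradiction from the structure of escaping points with bounded itinerary. The first two steps are fine. The problem is the third step as you state it. The claim that ``an escaping point whose strip itinerary equals $\underline u$ must sit strictly above threshold, i.e.\ it is an interior ray point $g_{\underline u}(\tau)$ with $\tau>t_{\underline u}$'' is, modulo the classification of escaping points, exactly the assertion of the lemma: by Schleicher--Zimmer, an escaping point with that itinerary is either on the open ray or is its landing point, and ruling out the second alternative is precisely what you are trying to prove. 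Taken as a black box, your crux therefore begs the question; and the phrase ``$t_{\underline u}$ is exactly the escaping threshold'' does not correspond to a citable statement (for bounded addresses one has $t_{\underline u}=0$, which admits no such interpretation).

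The non-circular input the paper actually uses is the Devaney--Krych / Schleicher--Zimmer description of the set of \emph{directly} escaping points: for itineraries bounded by $K'$ there is an $M>0$ such that $E_{M,K'}=\{z:\Re f^n_\lambda(z)\ge M \text{ for all } n\ge 0,\ \underline s(z)\in\Sigma_{K'}\}$ is a union of \emph{open} tails of rays (graphs over $\{x\ge M\}$), containing no endpoints. This hypothesis concerns the \emph{entire} forward orbit having large real part, so it cannot be applied to $a$ itself, whose early orbit may dip to small real parts; one applies it to $f^{n_0}(a)$ for $n_0$ large enough that $\Re f^n_\lambda(a)>M+1$ for all $n\ge n_0$. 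Then $f^{n_0}(a)$ lies in an open ray tail, while by your functional-equation step it is the landing point of $g_{\sigma^{n_0}\underline u}$ --- a contradiction. You already have all the ingredients for this (your first step gives that every $f^n(a)$ is a landing point), but the argument must be reassembled around this precise external statement and applied at the shifted address rather than at $\underline u$ itself.
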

\begin{proof}
Let $K=\sup_n\{|u_n|\}$. As mentioned above, since the point $0$ escapes, there is an external ray 
$g_{\underline r}:(t_{\underline r},\infty)\to\mathbb{C}$ such that  $0$ is 
either its landing point or it is contained in $g_{\underline r}(t_{\underline r}, \infty)$. 
Then, as in the  proof of Lemma \ref{code}, and using its notation, we 
consider  the dynamic coding defined with use of the curves $g_{k\underline r}$. Since the rays do not 
intersect, every   image of the ray   $f^n_\lambda( g_{\underline u}([t_{\underline u},\infty)))$,  is 
contained in some strip $S_{k(n)}$, and it is easy to see, using the behaviour of the rays at infinity  (see Prop. 3.4 in \cite{schleicherzimmer})
 that $|k(n)|<K''$ for some $K''\ge K$. Moreover,  there exist $M'>0$ and $K'\ge K''$  such that
$$\bigcup_{|k|<K'}S_k\cap \{{\rm Re}z>M\}\subset \bigcup_{|k|<K'}P_k.$$

 Denote by $\Sigma_{K'}$ the set of all infinite sequences $\underline w=(w_n)_{n=0}^\infty$, with integer entries, and such that $|w_n|<K'$ for all $n$.  
Every point $z\in \mathbb{C}$ 
has its "geometric address" $\underline s(z)$ defined by $s_i(z)=k$ if $f^n_\lambda(z)\in P_k$.  
According to \cite{dk}, Section 3 (see also \cite{schleicherzimmer}, Prop. 3.4), there exists $M>0$ depending on $K'$ and $\lambda$ such that the set of "directly escaping points":
$$E_{M,K'}=\{z\in\mathbb{C}: {\rm Re}f^n_\lambda(z)\ge M{\rm~~ for~ all~~} n\ge 0 {\rm ~and~~}\underline 
s(z)\in\Sigma_{K'}\}$$ 
is a union of "tails of rays" $g_{\underline s}(t)$, $\underline s\in\Sigma_{K'}$.  Each tail is a curve to $\infty$,  and, actually, a graph of a function defined in $\{x\ge M\}$, and for every $z\in g_{\underline s}([M,\infty)$ we have  $\underline s(z)=\underline s$.

Now, let $a$ be the landing point of the tail $g_{\underline u}$, and assume that $f^n_\lambda(a)\to\infty$. Then there exists $n_0$ such that for $n\ge n_0$ ${\rm Re}f^n_\lambda(a)>M+1$.
Consequently, $f^{n_0}_\lambda(a)\in E_{M,K'}$ and, since ${\rm Re} f^{n_0}(a)>M+1$, $f^{n_0}(a)$ is in a tail of some ray (thus, it is located in an open ray). This is a contradiction since $f_\lambda$ maps ends of rays to ends of rays.      
\end{proof}

\begin{twr}\label{contrempe}
Assume that the parameter $\lambda$ satisfies $f^n_\lambda(0)\to\infty$. Assume additionally that ${\rm Im}(f^n_\lambda(0))$ remains bounded. Let $\Lambda$ be the indecomposable continuum constructed in  \cite{rempenonlanding}. Then $\dim_H(\Lambda)=1$.

\end{twr}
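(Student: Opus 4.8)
The plan is to derive the upper bound $\dim_H(\Lambda)\le 1$ from Corollary~\ref{ubd}, exactly in the spirit of the proof of Theorem~\ref{contdevaney}; the matching lower bound $\dim_H(\Lambda)\ge 1$ is automatic since $\Lambda$ is a non-degenerate continuum in the plane. So it suffices to exhibit a thin set $W$ such that $\Lambda$, with at most one point removed, is contained in $\Lambda_{W,ubd}$. First I would check that the standing hypotheses actually put us in the range of the machine: since $f^n_\lambda(0)\to\infty$ while ${\rm Im}(f^n_\lambda(0))$ stays bounded, the Remark following Definition~\ref{defsuper} shows that $\lambda$ is super-growing, so Theorem~\ref{bigM} and Corollary~\ref{ubd} apply. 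The same boundedness forces the external address $\underline r$ of the singular ray to have bounded entries (as $\beta_n\in P_{r_n}$ for large $n$), and hence, by the very definition $T_n=2+\max_{k\le n}r_k$, the address $\underline s=s(n_1,n_2,\dots)\in R$ defining $\Lambda$ has all entries bounded by some integer $K$.

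Next I would produce the thin set $W$. I adopt the dynamic coding used in the proof of Lemma~\ref{code}: the curves $g_{k\underline r}=f^{-1}(g_{\underline r})\cap P_k$ cut the plane into strips $S_k$. Because $f=\lambda e^z$ maps each $P_k$ biholomorphically onto $\C^*$, the branch $g_{k\underline r}$ lies in $\overline{P_k}$, and therefore $\overline{S_k}\subset\overline{P_k}\cup\overline{P_{k+1}}$. Consequently the finitely many closed strips $\overline{S_k}$ with $|k|\le K+1$ are all contained in a single horizontal strip $W=\{z:|{\rm Im}(z)|\le P\}$, which is thin. On the other hand $\Lambda$ is the accumulation set of $g_{\underline s}$, so $\Lambda\subset G_{\underline s}$, and every point of $G_{\underline s}$ carries the single dynamic address $\tilde{\underline s}$, i.e. $f^n(z)\in\overline{S_{\tilde s_n}}$ for all $n$. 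Since the geometric address of the tail of $g_{\underline s}$ equals $\underline s$ and each $P_m$ meets only $S_{m-1}$ and $S_m$, the entries $\tilde s_n$ are again bounded by $K+1$; hence $f^n(z)\in W$ for every $z\in\Lambda$ and every $n\ge 0$, that is $\Lambda\subset\Lambda_W$.

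Finally I would invoke Lemma~\ref{code2}: at most one point of $H_{\tilde{\underline s}}\supset\Lambda$ has a bounded trajectory (this is the case $t_0>t_{\underline r}$, with $0$ on an open ray; in the landing case one first uses Lemma~\ref{code3} to see that the landing point of $\underline r$ is non-escaping and argues analogously). Thus $\Lambda$, with at most one point removed, lies in $\Lambda_{W,ubd}$, and Corollary~\ref{ubd} yields $\dim_H(\Lambda_{W,ubd})\le 1$, whence $\dim_H(\Lambda)\le 1$ and finally $\dim_H(\Lambda)=1$.

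The step I expect to be the main obstacle is the verification that the relevant union of closed dynamic strips is genuinely thin, that is, contained in one fixed horizontal strip: this rests on the geometric fact that each preimage branch $g_{k\underline r}$ stays in $\overline{P_k}$, and it requires some care about the behaviour of these curves near both ends (${\rm Re}\to\pm\infty$) and about the passage between the geometric coding (by the $P_k$) and the dynamic coding (by the $S_k$), as well as the clean handling of the landing-point case of the singular value.
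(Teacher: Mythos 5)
Your overall architecture coincides with the paper's: reduce to the upper bound, obtain the super-growing condition from the Remark after Definition~\ref{defsuper}, exhibit a thin horizontal strip $W$ with $\bigcup_n f^n_\lambda(G_{\underline s})\subset W$, remove at most one point of bounded trajectory via Lemma~\ref{code}/Lemma~\ref{code2}, and finish with Corollary~\ref{ubd}. The step you yourself flag as the main obstacle is, however, exactly where your argument has a genuine gap. You justify $g_{k\underline r}\subset\overline{P_k}$ by the fact that $f$ maps $P_k$ biholomorphically onto $\mathbb{C}^*$. But the inverse branch $(f|_{P_k})^{-1}$ is discontinuous across the slit $f(\partial P_k)=(-\infty,0)$, so the set $(f|_{P_k})^{-1}(g_{\underline r}\setminus\{0\})$, while contained in $P_k$, is in general disconnected and does not cut the plane; the curves $g_{k\underline r}$ that actually bound the strips $S_k$ are the connected components of $f^{-1}(g_{\underline r})$, i.e.\ continuous lifts of the arc $g_{\underline r}([t_0,\infty))\setminus\{0\}$ under the exponential covering. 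Along such a lift, $\mathrm{Im}(z)$ is a continuous branch of $\arg(g_{\underline r}(t))-\mathrm{Arg}(\lambda)$, and as $t\searrow t_0$ (equivalently $\mathrm{Re}(z)\to-\infty$) this stays bounded only if the arc does not spiral into the singular value $0$. Nothing in your proposal rules out such spiralling; if it occurred, the strips $S_k$ would not lie in any horizontal strip and no thin set of the form $\{|\mathrm{Im}(z)|\le P\}$ would contain $\bigcup_n f^n_\lambda(G_{\underline s})$.

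The paper closes precisely this gap by invoking the differentiability of open dynamic rays (\cite{viana}): since $0=g_{\underline r}(t_0)$ with $t_0>t_{\underline r}$, the ray has a tangent vector at $0$, so $\arg g_{\underline r}(t)$ converges as $t\searrow t_0$, and each $g_{k\underline r}$ has a finite limit $A_k$ of its imaginary part as $\mathrm{Re}(z)\to-\infty$ (the end $\mathrm{Re}(z)\to+\infty$ is controlled by Prop.~3.4 of \cite{schleicherzimmer}, giving the limits $B_k$). With that input each $\overline{S_k}$ does lie in a horizontal strip, though not necessarily in $\overline{P_k}\cup\overline{P_{k+1}}$, since the total argument variation of the arc may exceed $2\pi$; your bound $|\tilde s_n|\le K+1$ should accordingly be relaxed to $|\tilde s_n|\le K+C$ for a constant $C$, which is harmless. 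A further small inaccuracy: the landing case for $0$ is not a second case to be ``argued analogously'' --- Lemma~\ref{code3} together with $f^n_\lambda(0)\to\infty$ excludes it outright, and this exclusion is what guarantees $t_0>t_{\underline r}$ and hence the applicability of the tangent-vector argument and of Lemma~\ref{code}. Apart from these points your proposal matches the paper's proof.
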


\begin{proof}[Proof of Theorem~~\ref{contrempe}]
As mentioned in Section~\ref{statement}, it is easy to see that the assumptions of the Theorem imply that the parameter $\lambda$ satisfies the super-growing condition (\ref{super}).
Since $0\in I(f_\lambda)$, it follows from the classification of escaping points (\cite{schleicherzimmer}) that $0$ belongs to some dynamic ray $g_{\underline r}(t_{\underline r}, \infty)\to\mathbb{C}$, or it is a landing point of a ray. Lemma~\ref{code3} allows us to exclude this second possibility. Therefore, $0=g_{\underline r}(t)$ for some $t>t_{\underline r}$. Open dynamic rays are smooth curves  \cite{viana}, see also \cite{forsterschleicher} for the estimates of the second derivative of the parametrization $t\mapsto g_{\underline r}(t)$, $t\in (t_{\underline r},\infty)$. 
In particular, the ray $g_{\underline r}(t_{\underline r},\infty)$ has a tangent vector at $0$.  Let us consider, as in the proof od Lemma~\ref{code}, the rays $g_{k\underline r}$. 
Then the existence of a tangent vector of $g_{\underline r}$ at $0$ guarantees that  for every $k\in\mathbb{Z}$ there exists a finite limit 
$$\lim_{{\rm Re z}\to -\infty, z\in g_{k\underline r}} {\rm Im}(z)=A_k$$
Moreover, it follows from the general bounds for the parametrization of hairs (see \cite{schleicherzimmer}, Prop 3.4) that there exists a finite limit
$$\lim_{{\rm Re} z\to+\infty, z\in g_{k\underline r}} {\rm Im} z=B_k=2\pi k-Arg\lambda$$
Obviously, $A_{k+1}=A_k+2\pi$, $B_{k+1}=B_k+2\pi$.

Now, let $\underline s$ be an arbitrary external address with bounded entries, and let $g_{\underline s}:
(t_{\underline s},\infty)\to\mathbb{C}$ be the corresponding external ray. Put  $G_{\underline 
s}=\overline{g_{\underline s}}$. As in the proof of Lemma~\ref{code} we notice that $G_{\underline s}$ is 
contained in the closure of some dynamic strip $S_k$ bounded by the curves $g_{k\underline r}$ and 
$g_{(k+1)\underline r}$. 

Moreover, again by the abovementioned Prop.3.4 in \cite{schleicherzimmer} we know  that for every dynamic ray $g_{\underline s}$
$$\lim_{t\to+\infty}{\rm Im}g_s(t)=2\pi s_1-Arg\lambda.$$
It now easily follows from two above observations that for every bounded sequence $g_{\underline s}$ there exists $K>0$ such that 

$$\bigcup_{n\ge 0}f^n_\lambda (G_{\underline s})\subset W=\left\{z\in\mathbb{C}:|{\rm Im}z|\le K\right\}$$  
Thus,
$$\Lambda\subset G_{\underline s}\subset \Lambda_W.$$ 

By Lemma~\ref{code} we have  ${\rm {card}}(G_{\underline s}\setminus  \Lambda_{W,ubd})\le 1$.
Therefore, using Theorem ~\ref{bigM}, we conclude that $\dim_H (\Lambda)\le 1$, and, again, since $\Lambda$ is a non-trivial continuum, $\dim_H(\Lambda)=1$.
\end{proof}
As a corollary we have
\begin{cor}
If $\Lambda$ is an indecomposable continuum constructed, for the map $f_1(z)=\exp(z)$, in \cite{devaneyjarque} then $\dim_H(\Lambda)=1$.
\end{cor}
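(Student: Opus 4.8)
The plan is to deduce this directly from Theorem~\ref{contrempe}, once I check that the map $f_1(z)=e^z$ and the Devaney--Jarque continua satisfy its hypotheses. First I would verify that $\lambda=1$ meets the assumptions of Theorem~\ref{contrempe}. The singular orbit is $0,1,e,e^{e},\dots$, which escapes to $\infty$, and since every iterate $f_1^n(0)$ is real, the sequence $\mathrm{Im}(f_1^n(0))$ is identically zero, hence bounded. By the Remark following Definition~\ref{defsuper} this already forces $\lambda=1$ to be super-growing, so the entire apparatus of Theorem~\ref{bigM} and Corollary~\ref{ubd} is available. For $\lambda=1$ the positive real axis is forward invariant and stays in the strip $P_0$, so the singular value lies on the ray $g_{\underline r}$ with $\underline r=\overline 0$; as in the proof of Theorem~\ref{contrempe}, Lemma~\ref{code3} rules out $0$ being a mere landing point, so $0=g_{\underline r}(t)$ for some $t>t_{\underline r}$.

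Next I would identify each Devaney--Jarque continuum as the accumulation set of a dynamic ray with \emph{bounded} external address. By the construction recalled in the introduction, such a continuum $\Lambda$ is the accumulation set in the sphere of a ray $g_{\underline s}$ whose address $\underline s=(t_{m_1},0_{n_1},t_{m_2},0_{n_2},\dots)$ is built from blocks $t_{m_j}$ with all digits bounded by a fixed $M$, separated by blocks of zeros. In particular $\underline s$ is a bounded sequence, and since the zero blocks are copies of the entries of $\underline r=\overline 0$, these are exactly addresses of the type treated in \cite{rempenonlanding}; thus the Devaney--Jarque continua form a subclass of the Rempe continua. What matters for the argument, however, is only that $\underline s$ is bounded, which is precisely the hypothesis ``$\underline s$ has bounded entries'' used in the proof of Theorem~\ref{contrempe}.

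With these observations in place the conclusion is inherited verbatim from Theorem~\ref{contrempe}. The boundedness of $\underline s$ produces a horizontal strip $W=\{z:|\mathrm{Im}(z)|\le K\}$, which is a thin set, satisfying $\Lambda\subset G_{\underline s}\subset\Lambda_W$; Lemma~\ref{code} gives $\mathrm{card}(G_{\underline s}\setminus\Lambda_{W,ubd})\le 1$; and Corollary~\ref{ubd} then yields $\dim_H(\Lambda)\le 1$. Since $\Lambda$ is a non-degenerate continuum in the plane, $\dim_H(\Lambda)\ge 1$, and equality follows.

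The only point that I expect to require genuine care is the bookkeeping in the second step: matching the coding conventions of \cite{devaneyjarque} with the external/dynamic addresses used in the proof of Theorem~\ref{contrempe}, and confirming that the Devaney--Jarque addresses are genuinely bounded and that the associated ray lies in the closure of a single dynamic strip. Once this identification is settled, no new analytic estimates are needed, since the dimension bound is simply imported from Theorem~\ref{contrempe} rather than re-proved.
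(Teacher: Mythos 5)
Your proposal is correct and follows essentially the same route the paper intends: the corollary is deduced from Theorem~\ref{contrempe} (whose proof only uses that the external address is bounded), after checking that $\lambda=1$ has a real, hence bounded-imaginary-part, escaping singular orbit and that the Devaney--Jarque continua are accumulation sets of rays with bounded addresses. The paper states the corollary without further proof, and your verification of the hypotheses is exactly the bookkeeping it leaves implicit.
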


\section{Hausdorff measure of the continua}\label{hm}
As a complement of our result let us note the following general remark:
\begin{stw}\label{sskoncz}
One--dimensional Hausdorff measure of an indecomposable continuum in the plane is not $\sigma$-finite. 
\end{stw}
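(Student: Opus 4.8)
The plan is to prove that a nondegenerate indecomposable continuum $\Lambda$ in the plane cannot have $\sigma$-finite one-dimensional Hausdorff measure by exploiting the structure theory of sets with finite $\mathcal{H}^1$ measure. The key classical input I would invoke is the decomposition theorem of Besicovitch: any set $E$ with $\mathcal{H}^1(E)<\infty$ splits into a \emph{rectifiable} part $E_r$ (covered up to $\mathcal{H}^1$-null sets by countably many rectifiable curves) and a \emph{purely unrectifiable} part $E_u$. If $\Lambda$ had $\sigma$-finite $\mathcal{H}^1$ measure, I could write $\Lambda=\bigcup_k \Lambda_k$ with each $\mathcal{H}^1(\Lambda_k)<\infty$ and apply this decomposition to each piece.

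The strategy splits according to whether the rectifiable part carries positive measure. First I would argue that for a nondegenerate continuum the purely unrectifiable part cannot carry all the measure, because a continuum of positive, finite $\mathcal{H}^1$ measure that is purely unrectifiable leads to a contradiction via the projection theorem: a purely unrectifiable set projects to a set of Lebesgue measure zero in almost every direction, whereas a nondegenerate continuum projects onto a nondegenerate interval in \emph{every} direction (connectedness forces the projection to be a full interval), hence a set of positive linear measure. This is the main tension I would use, and it shows that some portion of $\Lambda$ must be rectifiable.

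The core obstacle, and the heart of the argument, is to derive a contradiction from the presence of a rectifiable subarc. If $\sigma$-finiteness held, the rectifiable part would contain a subset of positive $\mathcal{H}^1$ measure lying on a single rectifiable curve $\gamma$. The point is that a rectifiable arc inside $\Lambda$ would force $\Lambda$ to contain a \emph{proper subcontinuum with nonempty interior relative to} $\Lambda$, or more precisely would allow one to separate $\Lambda$ into two proper subcontinua whose union is $\Lambda$ — directly contradicting indecomposability. I would make this precise by recalling that an indecomposable continuum has the property that every proper subcontinuum is nowhere dense in $\Lambda$ (equivalently, $\Lambda$ equals the closure of any of its composants' complements only trivially); a rectifiable arc of positive measure gives enough ``room'' to build a decomposition, since one can take the closure of a composant-piece on one side of the arc and its complement's closure on the other.

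The step I expect to be genuinely delicate is this last passage from ``$\Lambda$ contains a rectifiable piece of positive measure'' to ``$\Lambda$ is decomposable.'' The clean way to handle it is to combine the two halves: $\sigma$-finiteness forces \emph{both} that the unrectifiable part has measure-zero projections (ruling out that all the measure is unrectifiable, by the interval-projection argument above) \emph{and} that any rectifiable part produces local arc structure incompatible with the hereditary indecomposability / nowhere-density of proper subcontinua. Since a nondegenerate indecomposable continuum can be neither, we reach a contradiction, and therefore $\mathcal{H}^1$ restricted to $\Lambda$ is not $\sigma$-finite. I would keep the exposition short here, citing the projection and rectifiability theorems (Besicovitch, and the structure theory as in Falconer or Mattila) rather than reproving them, and citing a standard reference on indecomposable continua for the nowhere-density of proper subcontinua.
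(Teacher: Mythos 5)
There is a genuine gap, and it sits exactly where you flagged it: the passage from ``$\Lambda$ contains a rectifiable subset of positive $\mathcal{H}_1$ measure'' to ``$\Lambda$ is decomposable.'' Two things go wrong. First, a subset of positive $\mathcal{H}_1$ measure lying on a rectifiable curve $\gamma$ need not contain any arc of $\Lambda$: the intersection $\Lambda\cap\gamma$ can be totally disconnected (a fat Cantor set on a line has positive length), so you do not actually obtain a ``rectifiable arc inside $\Lambda$.'' Second, and more fatally, even if $\Lambda$ \emph{does} contain arcs, this is not in tension with indecomposability. The Knaster buckethandle is indecomposable and is a union of arcs (each composant is a one-to-one continuous image of a ray), and the very continua treated in this paper contain dynamic rays, which are smooth curves. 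So the claimed contradiction --- that local arc structure is ``incompatible with the nowhere-density of proper subcontinua'' --- is false as stated; an arc in $\Lambda$ is a proper subcontinuum and can perfectly well be nowhere dense in $\Lambda$. Your sketch of how to build the decomposition (closure of a composant-piece ``on one side of the arc'' versus the other) does not produce two proper subcontinua with union $\Lambda$, and I do not see how to repair it. The projection-theorem half of your argument is correct but ends up ruling out only the purely unrectifiable case, which is not where the difficulty lies.

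The paper's proof goes a completely different and much more elementary route, using only the composant structure: an indecomposable continuum is the union of \emph{uncountably many pairwise disjoint} composants; each composant is connected and dense in $\mathcal{C}$, hence has diameter at least $\frac{1}{2}\mathrm{diam}(\mathcal{C})>0$, hence has positive $\mathcal{H}_1$ measure (a connected set of positive diameter has $\mathcal{H}_1$ measure at least its diameter). An uncountable disjoint family of sets of positive measure cannot fit inside a countable union of sets of finite measure (a pigeonhole argument on the levels $\mathcal{H}_1>\frac1n$ finishes it). If you want to salvage a rectifiability-based approach, the correct classical input is different from the one you invoke: a continuum of \emph{finite} $\mathcal{H}_1$ measure is arcwise connected and locally connected (Eilenberg--Harrold), and a nondegenerate locally connected continuum is decomposable; but that only handles finite measure, not $\sigma$-finiteness, which is why the composant argument is the right tool here.
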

To prove Proposition \ref{sskoncz} we will need to state some facts about indecomposable continua. See \cite{kur} for definitions and properties. Let us denote the continuum by $\mathcal{C}$.
\begin{dfn}
For a point $p$ define the set $K_p$ as a union of all proper subcontinua containing $p$, in other words
$K_p=\{x\colon \mbox{$\exists$ a proper subcontinuum $S$ of $\mathcal{C}$ containing $p$ and $x$}\}$. The set $K_p$ is called the \emph{composant} of the  point $p$.
\end{dfn}
The following facts are easy to verify cf. \cite{kur}, Section 48.
\begin{fact} Every composant of an indecomposable metric continuum  is an $F_\sigma$ set of the first category.
\end{fact}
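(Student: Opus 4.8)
The plan is to exhibit the composant $K_p$ directly as a countable union of proper subcontinua of $\mathcal{C}$, and then to invoke the standard characterisation of indecomposability. Recall (cf.\ \cite{kur}, Section~48) that a continuum is indecomposable if and only if each of its proper subcontinua is nowhere dense; I will use the ``only if'' direction throughout. Since $\mathcal{C}$ is a compact metric space it is second countable, so I would fix a countable base $\{U_i\}_{i\in\mathbb{N}}$ of nonempty open subsets and set $F_i=\mathcal{C}\setminus U_i$, a closed set. For each $i$ define
$$E_i=\bigcup\{S:\ S\ \text{is a subcontinuum of}\ \mathcal{C},\ p\in S,\ S\subset F_i\},$$
so that $x\in E_i$ precisely when some subcontinuum $S$ satisfies $p,x\in S\subset F_i$.

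First I would check that $K_p=\bigcup_i E_i$. Every $S$ occurring in the union defining $E_i$ misses the nonempty open set $U_i$, hence is proper, so $E_i\subset K_p$ and therefore $\bigcup_i E_i\subset K_p$. Conversely, if $x\in K_p$ then there is a proper subcontinuum $S$ with $p,x\in S$; by the characterisation above $S$ is nowhere dense, so $\mathcal{C}\setminus S$ is a nonempty open set and thus contains some basic $U_i$, which gives $S\subset F_i$ and $x\in E_i$. This establishes the reverse inclusion, and since the index set is countable, $K_p$ is displayed as a countable union.

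The main point is that each $E_i$ is \emph{closed} and, when nonempty, a \emph{proper subcontinuum} of $\mathcal{C}$. Connectedness is immediate, as $E_i$ is a union of connected sets sharing the common point $p$. For closedness I would pass to the hyperspace $C(\mathcal{C})$ of subcontinua of $\mathcal{C}$ with the Hausdorff metric, which is compact because $\mathcal{C}$ is a compact metric space (cf.\ \cite{kur}). Given $x\in\overline{E_i}$, choose $x_n\to x$ with witnessing subcontinua $S_n\ni p,x_n$, $S_n\subset F_i$; by compactness a subsequence $S_{n_k}$ converges in the Hausdorff metric to a subcontinuum $S$, and a routine limit argument shows $p\in S$, $x\in S$ (using $x_{n_k}\to x$ and $x_{n_k}\in S_{n_k}$), and $S\subset F_i$ (a Hausdorff limit of subsets of the closed set $F_i$ stays in $F_i$). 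Hence $x\in E_i$, so $E_i$ is closed; being closed in the compact space $\mathcal{C}$ it is compact, and $E_i\subset F_i\subsetneq\mathcal{C}$ shows it is proper.

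Finally, invoking indecomposability once more, every proper subcontinuum is nowhere dense, so each $E_i$ is a closed nowhere dense set. Consequently $K_p=\bigcup_i E_i$ is a countable union of closed nowhere dense sets, i.e.\ an $F_\sigma$ set of the first category, as claimed. I expect the only genuinely delicate step to be the closedness of $E_i$: it is exactly there that compactness of the hyperspace of subcontinua enters, whereas every remaining step reduces to the single structural fact that proper subcontinua of an indecomposable continuum are nowhere dense.
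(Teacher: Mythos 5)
Your proof is correct, and it is worth noting that the paper itself offers no argument for this Fact, deferring instead to \cite{kur}, Section~48; what you have written out is essentially the classical proof found there: decompose $K_p$ over a countable base as a union of sets $E_i$ of points joined to $p$ within $F_i=\mathcal{C}\setminus U_i$, verify each $E_i$ is a proper subcontinuum, and invoke the characterisation of indecomposability via nowhere dense proper subcontinua. All the steps check out, including the hyperspace compactness argument for closedness of $E_i$ and the two inclusions giving $K_p=\bigcup_i E_i$. One small simplification you could make: your $E_i$ is precisely the connected component of $p$ in the compact set $F_i$ (every subcontinuum of $F_i$ through $p$ lies in that component, and the component is itself a subcontinuum of $\mathcal{C}$ contained in $F_i$), so closedness is automatic --- components of compact sets are closed --- and the passage through the hyperspace $C(\mathcal{C})$, while perfectly valid, can be avoided entirely. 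With or without that shortcut, the argument is complete and delivers exactly the statement: each nonempty $E_i$ is a closed nowhere dense set, hence $K_p$ is $F_\sigma$ and of the first category.
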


\begin{fact}
In an indecomposable metric continuum every composant is a dense subset of $\mathcal{C}$.
\end{fact}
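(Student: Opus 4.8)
The plan is to derive the density of composants directly from the strong boundary bumping theorem, which I will quote from \cite{kur}: if $\mathcal{C}$ is a continuum and $E\subsetneq\mathcal{C}$ is a nonempty closed set, then every component of $E$ meets the boundary $\partial E=E\cap\overline{\mathcal{C}\setminus E}$. In particular, indecomposability will not really be used here; the only features of $\mathcal{C}$ I shall need are that it is a nondegenerate metric continuum, hence compact, connected, and without isolated points.

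Fix $p\in\mathcal{C}$ and let $U\subset\mathcal{C}$ be an arbitrary nonempty open set; I must exhibit a point of the composant $K_p$ lying in $U$. I may assume $U\neq\mathcal{C}$, since $K_p$ is nonempty (a proper subcontinuum through $p$ is itself produced by boundary bumping applied to a small closed ball about $p$, whose component of $p$ reaches the sphere and is therefore a proper nondegenerate subcontinuum containing $p$). Because $\mathcal{C}$ has no isolated points, $U$ contains some point $q\neq p$, and by normality I can choose an open set $V$ with $q\in V\subset\overline{V}\subset U$ and $p\notin\overline{V}$. Set $E=\mathcal{C}\setminus V$; then $E$ is closed, proper (because $V\neq\emptyset$), and contains $p$ (because $p\notin\overline{V}$, hence $p\notin V$).

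Let $M$ be the component of $p$ in $E$. By the boundary bumping theorem, $M$ meets $\partial E=\partial V\subset\overline{V}\subset U$, so there is a point $z\in M\cap U$; moreover $z\neq p$, since $z\in\overline{V}$ while $p\notin\overline{V}$. As $M$ is a closed connected subset of the proper closed set $E$, it is a subcontinuum with $M\subsetneq\mathcal{C}$, and it is nondegenerate because it contains the two distinct points $p$ and $z$. Thus $M$ is a proper subcontinuum of $\mathcal{C}$ joining $p$ to $z$, whence $z\in K_p\cap U$. Since $U$ was an arbitrary nonempty open set, $K_p$ is dense in $\mathcal{C}$.

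The single nontrivial ingredient is the strong form of the boundary bumping theorem, asserting that \emph{every} component of a proper closed subset of a continuum reaches its boundary, not merely some distinguished one; this is precisely the fact I will invoke from \cite{kur}. The remaining points — that $E$ is a proper closed set, that $\partial(\mathcal{C}\setminus V)=\partial V\subset U$, and that a component of a closed subset of a compact metric space is automatically a subcontinuum — are routine point-set topology, so I anticipate no serious obstacle once the boundary bumping theorem is in hand.
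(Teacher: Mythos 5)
Your proof is correct. Note first that the paper offers no proof of this fact at all: it simply asserts that the three facts about composants ``are easy to verify cf.\ Kuratowski, Section 48,'' so there is no in-paper argument to compare against, and your write-up is a faithful reconstruction of the standard proof found in the cited literature. The key steps all check out: the strong boundary bumping theorem you invoke (every component of a nonempty proper closed subset $E$ of a continuum meets $E\cap\overline{\mathcal{C}\setminus E}$) is the correct formulation and holds in any compact connected Hausdorff space, since components of closed sets coincide with quasicomponents there; your set $E=\mathcal{C}\setminus V$ is nonempty, closed and proper; $\partial E=\partial V\subset\overline{V}\subset U$ because $V$ is open; and the component $M$ of $p$ in $E$ is automatically a subcontinuum, proper because $M\subset E\subsetneq\mathcal{C}$, so the boundary point $z\in M\cap\overline{V}$ indeed witnesses $K_p\cap U\neq\emptyset$. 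Your observation that indecomposability is irrelevant here is also accurate and worth retaining: density of composants holds in \emph{every} nondegenerate continuum, and indecomposability enters only in the companion facts (that composants are first-category $F_\sigma$ sets and that distinct composants are disjoint, giving uncountably many of them), which is exactly how the paper uses them in the proof of Proposition~\ref{sskoncz}. The only cosmetic remark is that the case distinction $U\neq\mathcal{C}$ and the separate verification that $K_p\neq\emptyset$ are unnecessary — your main construction goes through verbatim for $U=\mathcal{C}$, since all it needs is a point $q\neq p$ and a neighbourhood $V$ of $q$ with $p\notin\overline{V}$ — but this does not affect correctness.
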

\begin{fact}
An indecomposable metric continuum is a union of uncountably many disjoint composants.
\end{fact}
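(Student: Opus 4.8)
The plan is to prove two assertions: that the composants of $\mathcal{C}$ form a partition of $\mathcal{C}$ into pairwise disjoint sets, and that there are uncountably many of them. For the partition statement I would introduce the relation $x\sim y$, meaning that some proper subcontinuum of $\mathcal{C}$ contains both $x$ and $y$, and verify that it is an equivalence relation whose classes are exactly the composants $K_p$. Reflexivity uses that $\mathcal{C}$ is nondegenerate, so that each singleton $\{x\}$ is a proper subcontinuum; symmetry is immediate. The essential point is transitivity, and this rests on an observation special to indecomposable continua: if $S_1,S_2$ are proper subcontinua with $S_1\cap S_2\neq\emptyset$, then $S_1\cup S_2$ is again a proper subcontinuum. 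Indeed $S_1\cup S_2$ is compact and connected as a union of two intersecting continua, and were it equal to $\mathcal{C}$ this would exhibit $\mathcal{C}$ as a union of two proper subcontinua, contradicting indecomposability. Hence if $S_1$ witnesses $x\sim y$ and $S_2$ witnesses $y\sim z$, the set $S_1\cup S_2$ is a proper subcontinuum containing $x$ and $z$, so $x\sim z$. A direct check then shows that $K_p$ coincides with the equivalence class of $p$: therefore the composants are pairwise disjoint, and since each $p$ lies in $K_p$ they cover $\mathcal{C}$.

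For the uncountability I would argue by contradiction using the Baire category theorem. By the first Fact recalled above, every composant of $\mathcal{C}$ is of the first category in $\mathcal{C}$. Suppose the partition consisted of only countably many composants. Then $\mathcal{C}$, being their union, would be a countable union of first-category sets, hence of the first category in itself. But $\mathcal{C}$ is a nonempty compact metric space, thus a complete metric space, and the Baire category theorem guarantees that such a space is of the second category in itself. This contradiction shows that the family of composants cannot be countable, and so it is uncountable.

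The only genuinely nontrivial input is the union lemma for proper subcontinua, which is precisely where indecomposability enters; once it is in hand the partition property is formal, and the uncountability is an immediate application of Baire's theorem combined with the previously stated first-category Fact. I therefore expect the main (though still short) obstacle to be the careful verification that the union of two intersecting proper subcontinua remains proper, together with the identification of the composant $K_p$ with the $\sim$-equivalence class of $p$.
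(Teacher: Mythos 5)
Your proof is correct and complete: the union lemma (two intersecting proper subcontinua of an indecomposable continuum have proper union) makes $\sim$ transitive so the composants partition $\mathcal{C}$, and Baire category together with the previously stated first-category Fact rules out a countable partition; you also correctly flag that reflexivity needs $\mathcal{C}$ nondegenerate, which holds in the paper's setting. The paper gives no proof of this Fact at all---it cites Kuratowski, Section 48---and your argument is precisely the classical one found there, so you have simply supplied the standard proof the paper omits.
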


The next  fact follows immediately from the   definition of Hausdorff measure.
\begin{fact} \label{fact3}
A connected measurable  set $A\subset \mathbb{R}^2$ with $\mathrm{diam}(A)>0$ has positive 1-dimensional Hausdorff measure $\mathcal{H}_1(A)>0$.
\end{fact}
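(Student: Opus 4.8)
The plan is to reduce this two-dimensional statement to a one-dimensional one by orthogonal projection, using only the monotonicity of $\mathcal{H}_1$ under $1$-Lipschitz maps (equivalently, working directly from the covering definition of Hausdorff measure).

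First I would choose two points realising a positive fraction of the diameter: since $\mathrm{diam}(A)>0$, there exist $x,y\in A$ with $|x-y|=:L>0$ (one may take $L$ as close to $\mathrm{diam}(A)$ as desired, but any positive value suffices). Let $\ell$ be the straight line through $x$ and $y$, and let $\pi\colon\mathbb{R}^2\to\ell$ be the orthogonal projection onto $\ell$, which I identify with $\mathbb{R}$. The map $\pi$ is $1$-Lipschitz, and it fixes the line $\ell$, so $|\pi(x)-\pi(y)|=|x-y|=L$.

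Next I would use connectedness. Since $A$ is connected and $\pi$ is continuous, $\pi(A)$ is a connected subset of $\mathbb{R}$, hence an interval. As it contains the two points $\pi(x)$ and $\pi(y)$, which are at distance $L$, it contains the whole segment between them, so it is a nondegenerate interval with $\mathcal{H}_1(\pi(A))=\mathcal{L}^1(\pi(A))\ge L>0$; here I use that on a line the one-dimensional Hausdorff measure coincides with Lebesgue measure.

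Finally I would transfer this lower bound back to $A$. Because $\pi$ does not increase diameters, for every cover $\{U_i\}$ of $A$ by sets of diameter at most $\delta$ the images $\{\pi(U_i)\}$ cover the interval $\pi(A)$ and satisfy $\mathrm{diam}(\pi(U_i))\le\mathrm{diam}(U_i)$; since the total length of any cover of an interval is at least its length, $\sum_i\mathrm{diam}(U_i)\ge\sum_i\mathrm{diam}(\pi(U_i))\ge L$. Passing to the infimum over such covers and then letting $\delta\to 0$ gives $\mathcal{H}_1(A)\ge L>0$. (Equivalently, this last step is just the inequality $\mathcal{H}_1(A)\ge\mathcal{H}_1(\pi(A))$ valid for the $1$-Lipschitz map $\pi$.) There is no genuine obstacle here, which is why the fact is stated as immediate; the only points needing a word of justification are the two standard facts invoked, namely that Hausdorff measure does not increase under $1$-Lipschitz maps and that $\mathcal{H}_1$ agrees with Lebesgue measure on a line. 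I would also remark that measurability of $A$ is not actually needed for this lower bound, since the covering estimate applies verbatim to the Hausdorff outer measure of an arbitrary connected set.
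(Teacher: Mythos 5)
Your proof is correct. For comparison: the paper offers no argument at all for this fact --- it is stated with the remark that it ``follows immediately from the definition of Hausdorff measure'' --- so your projection argument is precisely the standard justification that the authors left implicit. All the steps you isolate are sound: the image $\pi(A)$ of a connected set under the continuous projection $\pi$ is a connected subset of the line, hence an interval containing $\pi(x)$ and $\pi(y)$; on a line $\mathcal{H}_1$ coincides with Lebesgue (outer) measure; and since $\pi$ is $1$-Lipschitz, pushing forward any $\delta$-cover of $A$ gives a cover of $\pi(A)$ with no larger diameters, so $\mathcal{H}_1(A)\ge\mathcal{H}_1(\pi(A))\ge L$. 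Your closing remarks are also accurate: measurability of $A$ plays no role, since the estimate is an outer-measure inequality, and by letting $L\to\mathrm{diam}(A)$ your argument in fact yields the sharper quantitative bound $\mathcal{H}_1(A)\ge\mathrm{diam}(A)$, which is more than the paper needs (it only uses $\mathcal{H}_1(K_p)>0$ for composants of diameter at least $\tfrac{1}{2}a$ in the proof of Proposition~\ref{sskoncz}). One could shorten the final covering computation by simply invoking the general fact that Lipschitz maps with constant $1$ do not increase $\mathcal{H}_1$, as you yourself note parenthetically, but writing it out from the covering definition, as you did, matches the paper's claim that the fact is immediate from that definition.
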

The above facts show that an indecomposable continuum consists of uncountably many disjoint sets of positive measure $\mathcal{H}_1$. This proves that the whole set cannot be $\sigma$-finite with respect to $\mathcal{H}_1$. The formal proof follows.
\begin{proof}[Proof of Proposition \ref{sskoncz}]
Assume the opposite: the continuum $\mathcal{C}$ can be represented as a union of countably many  disjoint measurable sets of finite measure: \[\mathcal{C}=\bigcup_{i=1}^\infty C_i\mbox{, \;} C_i\cap C_j =\emptyset \mbox{ (for $i\neq j$)} \mbox{ and } \mathcal{H}_1(C_i)<+\infty. \]
The set $\mathcal{C}$ is also a union of disjoint composants of some points: $\mathcal{C}=\bigcup_{p\in P} K_p$, where ${\rm card}(P) >\aleph_0$. 
Define the sets $K_p^i=K_p\cap C_i$.

The set $\mathcal{C}$ has a positive diameter $a$. Since  every $K_p$ is dense in $\mathcal{C}$, the diameter of $K_p$ is bigger than $\frac{1}{2}a$. Using Fact \ref{fact3} we see that $\mathcal{H}_1(K_p)>0$.

As $K_p=\bigcup_i K_p^i$ this means that for every $p$ at least one $K_p^i$ has positive measure $\mathcal{H}_1(K_p^i)>0$. Let us denote  these chosen sets by  $\widehat{K}_p$.

Define $A_n=\{p\in P\colon \mathcal{H}_1(\widehat{K}_p) >\frac{1}{n} \}$. The set $P$ is not countable so there exists $A_n=A_{n_0}$ containing uncountably many $p$'s. We denote them by $\widehat{p}$.

Finally, since there are countably many sets $C_i$,  one of them contains infinitely many of  $\widehat{K}_{\widehat{p}}$'s.
Choosing a countable subset $(\widehat p_k)_{k=1}^\infty$, such that $\widehat K_{\widehat p_k}\subset C_i$, we can write
\[\mathcal{H}_1(C_i)\ge\mathcal{H}_1\left(\bigcup_{k=1}^\infty\widehat{K}_{\widehat{p}_k}\right) = \sum_{k=1}^\infty\mathcal{H}_1\left(\widehat{K}_{\widehat{p}_k}\right )=\infty\]thus giving a contradiction.
\end{proof}


\end{document}